\newtheorem{thm}{Theorem}[section]
\newtheorem{thmintro}{Theorem}
\newtheorem{lem}[thm]{Lemma}
\newtheorem*{acknowledgement}{Acknowledgements}
\newcommand{\plc}{\mathbb{C} \mathbb{P}^2}
\newcommand{\drc}{\mathbb{C} \mathbb{P}^1}
\newcommand{\plr}{\mathbb{R} \mathbb{P}^2}
\newcommand{\drr}{\mathbb{R} \mathbb{P}^1}
\newcommand{\cc}{\mathbb{C}}
\newcommand{\rr}{\mathbb{R}}
\newcommand{\zz}{\mathbb{Z}}
\newcommand{\TT}{\mathbb{T}}
\newcommand{\TorCl}{T_{\Clif}}
\newcommand{\TetCS}{\Theta_{\CS}}
\newcommand{\Tetsurg}{\Theta_{\surg}}
\newcommand{\tTetCh}{\widetilde{\Theta}_{\Ch}}
\newcommand{\ctg}{T^{\ast}}
\newcommand{\cerc}{\mathbb{S}^1}
\newcommand{\rCh}{\rho_{\Ch}}
\DeclareMathOperator{\Clif}{Cliff}
\DeclareMathOperator{\Ch}{Ch}
\DeclareMathOperator{\CS}{CS}
\DeclareMathOperator{\surg}{surg}
\title[Toric constructions of monotone Lagrangian submanifolds]{Toric constructions of monotone Lagrangian submanifolds in~$\plc$ and~$\drc \times \drc$}
\date{}
\author{Miguel Abreu}
\address[MA]{Center for Mathematical Analysis, Geometry and Dynamical Systems,  
Instituto Superior T\'ecnico, Universidade de Lisboa\\Av. Rovisco Pais, 1049-001 Lisboa, Portugal}
\thanks{MA was partially funded by FCT/Portugal through projects UID/MAT/04459/2013 
and EXCL/MAT-GEO/0222/2012.}
\email{mabreu@math.tecnico.ulisboa.pt}
\author{Agn\`es Gadbled}
\address[AG]{Centro de Matem\'atica da Universidade do Porto, Departamento de Matem\'atica da FCUP\\
Rua do Campo Alegre, 687, 4169-007 Porto, Portugal}
\thanks{AG was partially supported by CMUP (UID/MAT/00144/2013), which is funded by FCT (Portugal) with national (MEC) and European structural funds through the programs FEDER, under the partnership agreement PT2020.}
\email{agnes.gadbled@fc.up.pt}
\thanks{MA and AG were both partially funded through the project PTDC/MAT/117762/2010. 
The present work is part of the authors' activities within CAST, a Research Networking Program of the 
European Science Foundation.}
\begin{document}


\maketitle

\begin{abstract}
We extract from a toric model of the Chekanov-Schlenk exotic torus in~$\plc$ 
methods for constructing Lagrangian submanifolds in toric symplectic manifolds. 
These constructions allow for some control of monotonicity. We 
recover this way some known monotone Lagrangians in the toric symplectic 
manifolds~$\plc$ and~$\drc \times \drc$ as well as new examples.
\end{abstract}


\section{Introduction}

One can reconstruct~$\plr$, the real part of~$\plc$ with respect to the standard 
conjugation map, from its image under the standard moment map of~$\plc$. 
The real part projects under the moment map of~$\plc$ onto the entire 
moment polytope, each point in the interior of the polytope having four preimages, 
the points on the interior of the edges on the boundary of the triangle 
having two preimages, the points on the vertices having one preimage. 
By taking four copies of the moment polytope and gluing them along the edges 
according to the prescriptions of the torus action on~$\plc$ (see~\cite{MR3042606}), 
one recovers the real projective plane.\\

From the Lagrangian submanifold point of view, $L = \plr$ is an important example of 
monotone Lagrangian submanifold in~$\plc$. Monotone means that there exists 
a positive constant~$K_L$ such that
$$\forall u \in H_2(\plc, L), \int_u \omega = K_L \,  \mu_L(u),$$
where~$\mu_L : H_2(\plc, L) \rightarrow \zz$ is the Maslov class of~$L$.

The exotic torus of Chekanov and Schlenk (see~\cite{MR2735030}) is another important 
example of monotone Lagrangian submanifold of~$\plc$. 
The second author proved in~\cite{Exotic} that this torus is Hamiltonian isotopic 
to a torus described by Biran and Cornea in~\cite{Bir-Cor-uniruling}. To do so, 
she Hamiltonian-isotoped  both tori to a so-called modified Chekanov torus~$\tTetCh$.
This torus has a nice image under the moment map and can be reconstructed, 
as the real projective space, out of copies of this image and gluing patterns.
The rules for gluing are of two types. The first are coming from the definition 
of the moment map and are the same as the ones used for the real part. 
The second are new and we have managed to interpret them as Lagrangian 
surgeries of two copies of the real part intersecting transversely 
at one isolated point and cleanly (in the sense of 
Pozniak~\cite{MR1736217}) along a circle.

The surgery for two Lagrangian submanifolds intersecting transversely at 
a point has been developped by Polterovich in~\cite{MR1097259} and we have 
modified it to keep a toric description of the result of the surgery.
The surgery for two Lagrangian submanifolds intersecting along an isotropic 
submanifold not reduced to a point is new and we intend to develop it in full generality 
in a future work. We show in our case:

\begin{thmintro}
The Chekanov--Schlenk torus is Hamiltonian isotopic in~$\mathbb{C} \mathbb{P}^2$ 
to a Lagrangian torus obtained from two copies of~$\mathbb{R} \mathbb{P}^2$ 
by Lagrangian surgeries at a point and along an isotropic circle.
\end{thmintro}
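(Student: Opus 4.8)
The plan is to reduce the statement to the toric reconstruction of the modified Chekanov torus~$\tTetCh$ and then read off the surgery description from that model. By the main result of~\cite{Exotic}, the Chekanov--Schlenk torus is already Hamiltonian isotopic in~$\plc$ to~$\tTetCh$, so it suffices to exhibit~$\tTetCh$ as the outcome of the two prescribed surgeries performed on two copies of~$\plr$. First I would write down explicitly the moment-map image of~$\tTetCh$ together with its gluing data, separating the type-1 rules (those inherited from the toric action, identical to the ones that reconstruct the real part) from the type-2 rules (the new ones), as described in the introduction.

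The heart of the argument is to recognize the pieces glued by the type-1 rules as portions of two copies of~$\plr$. Concretely, I would set up two Lagrangian embeddings of~$\plr$ in~$\plc$ arranged, within the toric model, so that the two copies meet transversely in a single point and cleanly, in the sense of Pozniak~\cite{MR1736217}, along one circle. Since each copy of the real part is itself reconstructed from four triangles glued by the torus action, the type-1 gluing of~$\tTetCh$ should then match the toric building blocks of~$\tTetCh$ with the building blocks of these two real projective planes, away from the loci where the type-2 rules act.

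Next I would interpret the type-2 rules as the two surgeries. At the isolated transverse intersection point, I would check that the type-2 identification coincides with Polterovich's Lagrangian surgery~\cite{MR1097259}, in the toric-compatible form developed here, so that resolving the crossing replaces a neighborhood by a handle. Along the clean intersection circle, I would verify that the remaining type-2 rule is exactly the new surgery along an isotropic circle: in a tubular model of the clean intersection one sees an~$\cerc$-family of two-dimensional transverse surgeries, and performing them coherently over the circle smooths the two copies of~$\plr$ into a single embedded torus. Matching both resolutions against the gluing patterns then identifies the result of the two surgeries with~$\tTetCh$ itself, and composing with the Hamiltonian isotopy of~\cite{Exotic} yields the Hamiltonian isotopy to the Chekanov--Schlenk torus asserted in the statement.

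The main obstacle I expect is the surgery along the isotropic circle, which is new: one must produce a Lagrangian normal model of the clean intersection, define the family of transverse surgeries coherently over the circle so that the output is smooth and embedded, and confirm that this local operation reproduces the type-2 gluing of~$\tTetCh$ rather than some other resolution. The point surgery, by contrast, is only a mild modification of Polterovich's construction, and the type-1 bookkeeping is essentially the classical reconstruction of the real part of a toric manifold, so these steps I anticipate to be routine once the combinatorics of the moment image of~$\tTetCh$ are fixed.
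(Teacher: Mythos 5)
Your overall reduction is the right one --- the paper also passes through the modified Chekanov torus~$\tTetCh$ via~\cite{Exotic}, and it also interprets the degeneration at~$x+y=\pi\rho_{\min}^2$ as a Polterovich-type surgery at the transverse point~$[0:0:1]$ and the degeneration at~$x+y=\pi\rho_{\max}^2$ as a surgery along the isotropic circle in~$\{z_2=0\}$. But the step where you ``match both resolutions against the gluing patterns'' and thereby identify the surgered Lagrangian ``with~$\tTetCh$ itself'' is a genuine gap. The output of the surgeries is a specific submanifold~$\Tetsurg$ determined by the handle parametrizations~$e^{-t}\xi+e^{t}e^{i\alpha}\xi$ and the chosen smoothings; it is not literally equal to~$\tTetCh$ (their moment images are already different trapezoids), and agreement of the combinatorial gluing data of two Lagrangians over comparable moment images does not by itself produce a Hamiltonian isotopy between them. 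Some additional mechanism is needed to interpolate, and your proposal does not supply one.

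The paper's mechanism is equivariance under the Hamiltonian circle action~$\rCh$ that rotates the first two homogeneous coordinates: the copies~$L_0$, $L_1$, both handles and the smoothings can all be chosen~$\rCh$-invariant, so~$\Tetsurg$ is the~$\rCh$-orbit of a single embedded circle lying in the half-plane~$\{z_0=0,\ \Re(z_1)\ge 0\}$ inside the ball of capacity~$3$ around~$[0:0:1]$, exactly as~$\tTetCh$ is the orbit of the curve~$\gamma$. Arranging the surgery parameters so that this circle encloses area~$1$, one isotopes it to~$\gamma$ inside the half-plane through curves of constant area; this gives an exact Lagrangian isotopy, which extends to an ambient Hamiltonian isotopy as in~\cite{Exotic}. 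Without this (or an equivalent) argument --- in particular without the area normalization, which you never impose --- your plan establishes at best a diffeomorphism of the two tori respecting the gluing combinatorics, not the Hamiltonian isotopy asserted in the statement.
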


With our method we can also recover Lagrangian embeddings of some surfaces in $\rr^4$
that were constructed by Givental in~\cite{MR868559} and that we then embed 
in~$\mathbb{C} \mathbb{P}^2$ or $\mathbb{C} \mathbb{P}^1 \times \mathbb{C} \mathbb{P}^1$.
The advantage of this construction is that we have a good control of the monotonicity condition.
We knew so far only the monotone Lagrangian embeddings of tori 
in~$\mathbb{C} \mathbb{P}^1 \times \mathbb{C} \mathbb{P}^1$ and of tori 
and real projective planes in~$\mathbb{C} \mathbb{P}^2$. Our method enables 
us to prove

\begin{thmintro}
There exists a monotone Lagrangian embedding of the connected sum of a surface of 
genus~$2$ and a Klein bottle in~$\mathbb{C} \mathbb{P}^2$ and a monotone Lagrangian embedding 
of the connected sum of a surface of genus~$4$ and a Klein bottle in the
product~$\mathbb{C} \mathbb{P}^1 \times \mathbb{C} \mathbb{P}^1$.
\end{thmintro}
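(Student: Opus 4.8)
The plan is to present each surface as the output of finitely many applications of the two Lagrangian surgeries of the first theorem above, performed on a toric configuration of real loci, and to use the toric bookkeeping both to identify the resulting diffeomorphism type and to force monotonicity.

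I would begin by fixing the elementary pieces. In the ambient manifold $M$ the relevant monotone Lagrangian is the real part: in $\plc$ it is $\plr$, with $\chi(\plr)=1$, while in $\drc\times\drc$ it is the real locus $\drr\times\drr\cong T^2$, with $\chi=0$. Each is monotone, with monotonicity constant I denote $K_0$, coming from the fixed-point set of an anti-symplectic involution. Following the $\rr^4$ constructions of Givental in~\cite{MR868559}, I would then place a suitable finite family of copies of these real loci inside $M$, using the ambient torus action to move them into position so that each relevant pair meets transversely at one point and cleanly (in the sense of Pozniak~\cite{MR1736217}) along a circle, locally reproducing the two-copy configuration underlying the first theorem above. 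The toric symmetry is what allows one to realize simultaneously many such clean-and-transverse intersection patterns with explicit moment-map images.

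Next I would resolve all these intersections with the point- and circle-surgeries of the paper. A point surgery resolves a transverse double point and is a connected sum, so it lowers the Euler characteristic by $2$; a clean circle surgery attaches a tube, leaving $\chi$ unchanged but able to switch the orientability of the surface (in the first theorem above it takes a Klein bottle to $T^2$; here I will want it to run the other way). Tracking $\chi$ and orientability through the configuration, I would check that in $\plc$ one lands on the non-orientable surface of Euler characteristic $-4$, that is $\Sigma_2\#K$, and that in $\drc\times\drc$, where the building block $T^2$ is orientable, the point surgeries first produce an orientable surface with $\chi=-8$ and a single orientability-switching circle surgery converts it to $\Sigma_4\#K$. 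Confirming that the prescribed surgeries can be carried out simultaneously and yield precisely these diffeomorphism types is the combinatorial core of this step.

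The final and genuinely hard step is monotonicity of the embedded surface $L$. Each surgery creates new relative classes in $H_2(M,L)$ — the disks swept across a resolved point and across each tube — and the identity $\int_u\omega=K_L\,\mu_L(u)$ must hold on every generator at once. Because the surgeries retain a toric description, the symplectic area of each new disk is computed from the moment polytope together with the surgery parameter measuring the size of the corresponding handle, while its Maslov index is read off from the local surgery model; I would then solve for the surgery sizes so that all area-to-Maslov ratios equal the common value $K_0$ inherited from the monotone real loci. The main obstacle is exactly this simultaneous tuning: one must show that the finitely many linear constraints coming from a generating set of $H_2(M,L)$ are mutually compatible and admit a solution with all handle sizes positive and small enough that the surgeries remain embedded. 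It is precisely the area-controlled, toric version of Polterovich's surgery~\cite{MR1097259} developed in the paper that makes such a solution available, and verifying its existence in both ambient manifolds is where I expect the real work to lie.
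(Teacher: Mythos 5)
Your overall strategy (surgeries on real loci, with toric bookkeeping of areas and Maslov indices) is the right one in spirit, but the configuration you propose is not the one that works, and two of your topological claims would fail. The paper's proof uses exactly \emph{two} copies of the real locus in each case and \emph{only} the surgery at a point: in $\plc$, $L_0=\plr$ and its image $L_1$ under $\mathrm{diag}(e^{i\pi/3},e^{-i\pi/3})$ meet transversally at the three torus-fixed points $[0:0:1]$, $[0:1:0]$, $[1:0:0]$, and three point surgeries give $\plr\#\plr\#\TT^2\#\TT^2\cong K\#\Sigma_2$; in $\drc\times\drc$, two real tori meeting at the four torus-fixed points give, after four point surgeries, $K\#\Sigma_4$. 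No circle surgery enters this theorem at all (that tool is reserved for the Chekanov--Schlenk torus). Your plan to have the point surgeries on two oriented tori ``first produce an orientable surface with $\chi=-8$'' and then switch orientability by a circle surgery cannot work: the two tori are Hamiltonian isotopic Lagrangian tori, so their algebraic intersection number vanishes, the four intersection points have cancelling signs, and two of the four handles necessarily reverse orientation --- this is exactly where the Klein bottle summand comes from, and the paper notes it cannot be avoided. Moreover, a configuration of two real tori meeting both at four points and cleanly along a circle is not available from the toric rotations you describe (rotating both factors gives four points and no circle; rotating one factor gives two circles and no points).

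On monotonicity, ``solving for the handle sizes'' is not the actual mechanism. All handles are taken with the same small parameter; what must be chosen is the \emph{type} of handle ($h$ versus $h'$) at each intersection point, because this determines whether the restriction of $L$ to each coordinate $\drc$ is a single ``tennis-ball-seam'' circle or a pair of circles, and in $\plc$ the two-circle case makes monotonicity impossible. With one circle per coordinate sphere the relevant disks automatically have area $\tfrac12$ (half the sphere, the handle corrections cancelling) and Maslov index $1$, computed via an explicit loop of unitary matrices along the boundary. In the $\drc\times\drc$ case with two circles per sphere one does need a tuning parameter, but it is the rotation angle $\tfrac{\pi}{2}+\delta$ defining $L_1$ (with $\delta=2\pi a(\varepsilon)$), not the handle sizes. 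So the ``finitely many simultaneous linear constraints'' you anticipate reduce, after the correct choice of handles, to at most one scalar equation; the genuine work lies in the handle selection and the Maslov index computations, which your outline does not address.
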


Note that neither the Klein bottle (see~\cite{MR2583968}) nor the orientable surface 
of genus~$2$ (see~\cite{MR2091366}) can be embedded as Lagrangian submanifolds of~$\plc$.

The structure of this article is as follows: in Section~\ref{sec:LocalModels} we 
study the gluing patterns for the modified Chekanov torus and we describe the surgeries 
we will use; in Section~\ref{sec:Constructionwithsurgeryatapoint} 
we give our main construction and study the 
monotonicity of examples we can get with the surgery at a point; finally, in 
Section~\ref{sec:Surgeryinbundle}, we use the surgery for an intersection along a circle 
to describe two monotone Lagrangian embeddings.

\begin{acknowledgement}
We thank Leonardo Macarini for useful conversations that, in particular, led to the 
identification of a mistake in an earlier draft of this paper. 
We also thank an anonymous referee for a very careful reading and detailed list of corrections.
\end{acknowledgement}

\section{The local models}
\label{sec:LocalModels}

\subsection{A toric model of the exotic torus of Chekanov and Schlenk in~$\plc$}

There is a well-known monotone torus in~$\plc$ called the Clifford torus which can be 
described in homogeneous coordinates as
$$\TorCl = \left\{{ \left. \left[ { e^{i\alpha}:  e^{i\beta}: 1  }\right] \right| 
\alpha, \beta \in [0, 2 \pi] } \right\}.$$

Its image under the moment map of~$\plc$ (corresponding to the normalization of the 
symplectic form we use in Section~\ref{sec:Constructionwithsurgeryatapoint})
$$\begin{array}{cccc}
 \mu: & \plc & \longrightarrow & \rr^2  \\
    & [z_0:z_1:z_2] & \longmapsto &  \left( {3 \frac{|z_0|^2}{\sum |z_i|^2}, 3 \frac{|z_1|^2}{\sum |z_i|^2} } \right),
\end{array}$$
is the barycenter~$(1,1)$ of the image of~$\plc$, the triangle obtained as the 
convex hull of the points~$(0,0),(3,0),(0,3)$.

In 2004, Chekanov and Schlenk (see~\cite{MR2735030}) have studied the torus given  
in homogeneous coordinates of~$\mathbb{C} \mathbb{P}^2$:
$$\TetCS = \left\{{ \left. \left[
{\frac{1}{\sqrt{2}}\gamma(s) e^{i\theta}: \frac{1}{\sqrt{2}}\gamma(s) e^{-i\theta}: 
\sqrt{\frac{3}{\pi} - \mid \gamma(s) \mid^2} 
}\right] \right| \theta, s \in [0, 2 \pi] } \right\}$$
where $\gamma : [0, 2 \pi] \longrightarrow \cc$ parametrizes a curve enclosing a 
domain of area~$1$ lying in the disk centered in the origin and of 
area~$2 + \varepsilon$ of~$\cc$, in the half-disk of complex numbers of positive 
real part (see Figure~\ref{fig:gamma}).
They have proved (see~\cite{MR2735030,Chek-SchI,Chek-SchII})
that~$\TetCS$ is a monotone Lagrangian torus in~$\plc$, non-displaceable and 
non-Hamiltonian isotopic to the Clifford torus (therefore called exotic) in~$\plc$.

By its definition, the Chekanov--Schlenk torus projects under the moment map~$\mu$ 
to a segment lying in the diagonal line of~$\rr^2$. More precisely the image is 
$$\left\{{ (x,x) \in \rr^2 \left| { \, x \in \left[{\frac{\pi}{2}\rho_{\min}^2, \frac{\pi}{2}\rho_{\max}^2} \right] } \right. }\right\}$$
where  $\rho_{\min}$ is the minimum of $|\gamma(s)|$ and $\rho_{\max}$ is the maximum.

There is a description of this exotic torus more adapted to the toric picture, 
that enables to reconstruct the torus from its moment map image as in 
the case of the real part of~$\plc$.

Such a description can be obtained by considering the modified Chekanov torus 
of~\cite{Exotic}. 
This torus is a torus Hamiltonian isotopic to the exotic torus of Chekanov--Schleck 
and is defined in homogeneous coordinates (with the normalizations 
of~\cite{MR2735030}) by
$$\tTetCh = \left \{ \left. \left[{\cos(\theta) \gamma(s): 
\sin(\theta) \gamma(s) : \sqrt{\frac{3}{\pi} - \mid \gamma(s) \mid^2} }\right] 
\; \right| \; \theta, s \in [0, 2 \pi] \right\}.$$

The image of the torus under the moment map~$\mu$ can be parametrised by 
$$\mu(\tTetCh) = \left\{ {\left( { \pi \cos^2(\theta) |\gamma(s)|^2, \pi  \sin^2(\theta) |\gamma(s)|^2 }\right) | \; \theta, s \in [0, 2 \pi] }\right\}.$$
It is a trapezoid sitting inside the polytope of $\plc$ between the two parallel lines 
$x + y = \pi \rho_{\min}^2$ and $x + y = \pi \rho_{\max}^2$.

If the curve $\gamma$ is such that $\gamma(0)=\rho_{\min}$, $\gamma(\pi)=\rho_{\max}$, 
$\gamma$ is symmetric with respect to the real axis, 
and each point $|\gamma(s)|=\rho(s)$ has only one preimage $s \in (0,\pi)$ 
(see for example Figure \ref{fig:gamma}), then 
for $s \neq 0, \pi$ and $\theta \neq 0,\pi$, the point
\begin{equation}
\label{eq:imagetorus}
\left( { \pi \cos^2(\theta) |\gamma(s)|^2, \pi \sin^2(\theta) |\gamma(s)|^2 }\right)
\end{equation}
has $8$ preimages in the torus.\\

Recall (see for example~\cite{MR2091310,MR1853077}) that the image of the moment map for~$\plc$, 
or for a general (compact, connected) toric manifold~$(M,\omega)$ is a convex polytope~$P$ such that the 
fiber of each point of~$P$ is an isotropic torus. 
Recall also that we have action-angle coordinates on the preimage~$\mathring{M}$ of the 
interior~$\mathring{P}$ which is the open dense set in~$M$ consisting of all the points where 
the action of the torus~$\TT^n$ is free. One can describe this set as
$$\mathring{M} \cong \mathring{P} \times \TT^n 
=  \left\{ {\left( {  x_1, \ldots, x_n , e^{i\theta_1}, \ldots, e^{i\theta_n}  }\right) 
\mid x  \in \mathring{P}, \theta \in \rr^n / 2 \pi \zz^n  }\right\},$$
where~$(x,\theta)$ are the action-angle coordinates for the symplectic form
$$\omega = \sum d x_j \wedge d \theta_j.$$
 
In the case of~$\tTetCh$, we parametrise the curve $\gamma$ above the real axis by 
$$\gamma(s) = \rho(s) e^{i t(s)},$$ 
such that $t(s) \in [0, t_{\max}], t_{\max} < \frac{\pi}{2}, t(0) = 0, t(\pi) = 0$.

\begin{figure}[htbp]
  \begin{center}
   \psfrag{C}{$\cc$}
   \psfrag{1}{$\scriptstyle{1}$}
   \psfrag{0}{$\scriptstyle{0}$}
   \psfrag{gamma}{$\scriptstyle{\gamma}$}
   \psfrag{rhomin}{$\scriptstyle{\rho_{\min}}$}
   \psfrag{rhomax}{$\scriptstyle{\rho_{\max}}$}
   \psfrag{tmax}{$\scriptstyle{t_{\max}}$}
   \includegraphics[height=7cm]{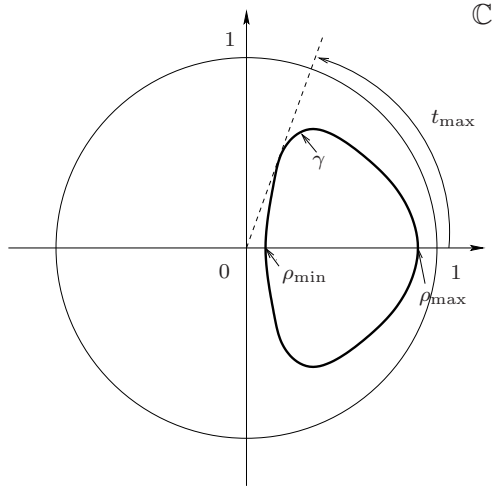}
   \caption{A suitable curve $\gamma$} 
   \label{fig:gamma}
  \end{center}
\end{figure} 

Then for a fixed $s$ in $(0,\pi)$ and a fixed $\theta$ in $(0, \frac{\pi}{2})$, 
the eight preimages of the point (\ref{eq:imagetorus})
in~$\mu(\tTetCh)$ are given in action-angle coordinates by:
$$A_{\epsilon,k,\ell} = \left( { \pi \cos^2(\theta) |\gamma(s)|^2, \pi \sin^2(\theta) |\gamma(s)|^2 , \epsilon t(s)+\frac{\pi}{2} + k \pi, \epsilon t(s)+ \frac{\pi}{2} + \ell \pi } \right)$$
with $\epsilon \in \{-1, 1\}$, $k, \ell \in \{0, 1\}$.

When $s$ goes to $0$ or $\pi$, $t(s)$ goes to $0$ and the points in the 
torus fiber converge (moving along the diagonal direction) 
towards one of the four points 
$$\left( {\frac{\pi}{2} + k \pi,  \frac{\pi}{2} + \ell \pi }\right), k, \ell \in \{0, 1\},$$
see Figure~\ref{fig:gluing}.

\begin{figure}[htbp]
  \begin{center}
   \psfrag{0}{$\scriptstyle{0}$}
   \psfrag{pi/2}{$\scriptstyle{\pi/2}$}
   \psfrag{pi}{$\scriptstyle{\pi}$}   
   \psfrag{3pi/2}{$\scriptstyle{3\pi/2}$}
   \psfrag{2pi}{$\scriptstyle{2\pi}$}
   \psfrag{$A_{1,0,0}$}{$A_{1,0,0}$}
   \psfrag{$A_{1,1,0}$}{$A_{1,1,0}$}
   \psfrag{$A_{1,0,1}$}{$A_{1,0,1}$}
   \psfrag{$A_{1,1,1}$}{$A_{1,1,1}$}
   \psfrag{$A_{-1,0,0}$}{$A_{-1,0,0}$}
   \includegraphics[height=12cm]{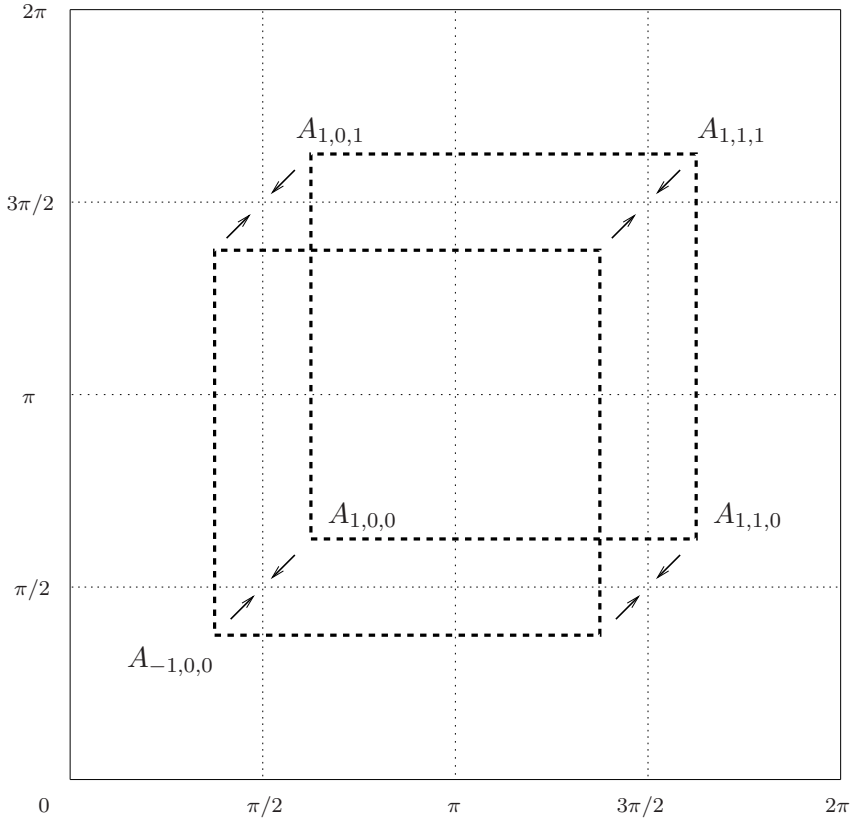}
   \caption{The points are at the four corners of each of the 
   dashed squares; 
   when $s$ goes to $0$, the corners of the two squares are identified} 
   \label{fig:gluing}
  \end{center}
\end{figure}

This describes the gluing of the trapezoid along the segments 
$x + y = \pi \rho_{\min}^2$ and $x + y = \pi \rho_{\max}^2$.\\

\subsection{The interpretation of the gluing along the segment $x + y = \pi \rho_{\min}^2$}
\label{sec:surgeryatapoint}

The gluing along the segment $x + y = \pi \rho_{\min}^2$ can be described as the 
Lagrangian surgery defined in~\cite{MR1097259} of two Hamiltonian isotopic copies 
of the real part~$\plr$ intersecting transversaly at the origin~$[0:0:1]$ 
(see Section~\ref{sec:exotictorus} for the details).

Let us describe the Lagrangian surgery we will use in the rest of this article 
which is a slight modification of~\cite{MR1097259}. 
Following Polterovich, one does the surgery of two transverse Lagrangians  
in a local chart around an intersection point. 
In this chart, one finds an almost-complex structure $j$ such that locally 
 $l_1 = j l_0$ and the local Lagrangian handle 
is the image of the sphere in $l_0$ times $[-T,T]$, for $T$ large under the map
$(\xi , t) \mapsto e^{-t} \xi + e^{t} j \xi$. One then connects the handle on its boundaries 
to the original Lagrangians by some smoothing.

Because we aim to control the monotonicity condition and keep the standard local chart 
in~$\plc$, we explicitely describe the Lagrangian surgery we will be using, 
without the use of an auxiliairy almost-complex structure $j$ in $\cc^2$.

\subsubsection*{The handle between two Lagrangian linear subspaces of $\cc^2$}

Consider the linear~$\cc^2$ and the two Lagrangian linear subspaces 
$$l_0 = \rr^2$$ 
and 
$$l_1 = \left( { \begin{array}{cc} e^{i \alpha} & 0 \\0 & e^{i \beta} \end{array} } \right) \rr^2$$ the image of~$l_0$ by the Hamiltonian diffeomorphism defined by the diagonal 
matrix~$diag (e^{i \alpha}, e^{i \beta})$ for~$\alpha$ and~$\beta$ not a 
multiple of~$\pi$.

We define a handle~$h$ parametrised by:
$$h = \left \{ { \left. e^{-t} \left( { \begin{array}{c} x_0\\x_1 \end{array} } \right) \\
+ e^{t} \left( { \begin{array}{c} e^{i \alpha} x_0\\ e^{i \beta} x_1 \end{array} } \right) \\
\right|   \begin{array}{c} t \in \rr \\x_0^2+x_1^2 = 1 \end{array} } \right \}.$$
It is asymptotic to~$l_0$ when t goes to~$- \infty$ and to~$l_1$ when t goes to~$+ \infty$.
One checks that it is a Lagrangian handle when~$\sin (\alpha) = \sin (\beta)$.

Note that for the same reason, the handle
$$h' = \left \{ { \left. e^{-t} \left( { \begin{array}{c} x_0\\x_1 \end{array} } \right) \\
+ e^{t} \left( { \begin{array}{c} e^{i \alpha} (-x_0) \\ e^{i \beta} (-x_1) \end{array} } \right) \\
\right|   \begin{array}{c} t \in \rr \\x_0^2+x_1^2 = 1 \end{array} } \right \}$$
is also a Lagrangian submanifold asymptotic to~$l_0$ and~$l_1$ and corresponds to the first 
handle for the angles~$(\alpha + \pi, \beta + \pi)$. \\

Note that when~$\sin (\alpha) = - \sin (\beta)$, one can also define two 
Lagrangian handles parametrized by
$$h = \left \{ { \left. e^{-t} \left( { \begin{array}{c} x_0\\x_1 \end{array} } \right) \\
+ e^{t} \left( { \begin{array}{c} e^{i \alpha} x_0\\ e^{i \beta} (-x_1) \end{array} } \right) \\
\right|   \begin{array}{c} t \in \rr \\x_0^2+x_1^2 = 1 \end{array} } \right \}$$
and
$$h' = \left \{ { \left. e^{-t} \left( { \begin{array}{c} x_0\\x_1 \end{array} } \right) \\
+ e^{t} \left( { \begin{array}{c} e^{i \alpha} (-x_0) \\ e^{i \beta} x_1 \end{array} } \right) \\
\right|   \begin{array}{c} t \in \rr \\x_0^2+x_1^2 = 1 \end{array} } \right \}.$$

\subsubsection*{The smoothing}

For some (large)~$T$,  denote by~$h_T$ the image of the handle for~$t$ between~$-T$ and~$T$:
$$h_T = \left \{ { \left. e^{-t} \left( { \begin{array}{c} x_0\\x_1 \end{array} } \right) + e^{t} \left( { \begin{array}{c} e^{i \alpha} x_0\\ e^{i \beta} x_1 \end{array} } \right) \right|   
\begin{array}{c} t \in [-T,T] \\x_0^2+x_1^2 = 1 \end{array} } \right \}.$$

Fix a parameter~$T$ large. We smooth the handle at the ends of~$h_T$ as in~\cite{MR1097259}.
Notice that the original surgery of Polterovich corresponds to the case  
when~$\alpha = \beta = \frac{\pi}{2}$ and we can obtain any of our handles from Polterovich's 
surgery by applying the linear transformation of~$\cc^2 = \rr^4$ 
with matrix
$$\left( { \begin{array}{cccc} 
1 & \cos(\alpha) & 0 & 0 \\
0 & \sin(\alpha) & 0 & 0 \\
0 & 0 & 1 & \cos(\beta) \\
0 & 0 & 0 & \sin(\beta) \end{array} } \right).$$
Hence we can smooth the handle at the boundary of~$h_T$ when~$\alpha = \beta = \frac{\pi}{2}$ as 
in~\cite{MR1097259} and then take the image of this smoothing by the linear map above to get a 
smoothing in our case.

Note that this surgery lies inside a big ball of radius~$R$, and outside it,
the Lagrangian submanifold obtained is the union of~$l_0$ and~$l_1$. 
As the linear Lagrangians are homogenous with respect to homotheties centered at the origin 
of~$\cc^2$, one can use a conformal transformation to make this Lagrangian surgery happen 
in a ball~$B_0$ of small radius. 
Equivalently, one can take the handle to be, not the image of the unit sphere in~$l_0$, 
but of a smaller one
$$x_0^2 + x_1 ^2 = \varepsilon_1^2$$
and the smoothing happening outside a ball of radius~$\varepsilon_2$ such that the Lagrangian 
submanifold after surgery identifies with the union of~$l_0$ and~$l_1$ outside a ball of 
radius~$\varepsilon$ for a parameter~$\varepsilon > \varepsilon_2 > \varepsilon_1 > 0$ small.

\subsubsection*{Controling the area}

Let us study the restriction of this surgery (before conformal transformation) 
to one factor~$\cc$ of~$\cc^2$, for instance the first. 
The trace of the surgery along this coordinate is given by:
$$h_T^1 = \left \{ { \left. e^{-t} + e^{t}  e^{i \alpha} \right|   
 t \in [-T,T]  } \right \}$$
for some large~$T$, followed by some small smoothing between its ends 
and the original~$l_0$ and~$l_1$, together with the symmetric curve about the origin.

Let us compute the area between the original~$l_0$ and~$l_1$ and one of the 
arcs of the surgery, 
namely the area in grey in Figure~\ref{fig:controlarea}.

\begin{figure}[htbp]
  \begin{center}
   \psfrag{L0}{$l_0$}
   \psfrag{L1}{$l_1$}
   \psfrag{L}{$h_T^1$}   
   \includegraphics[height=6cm]{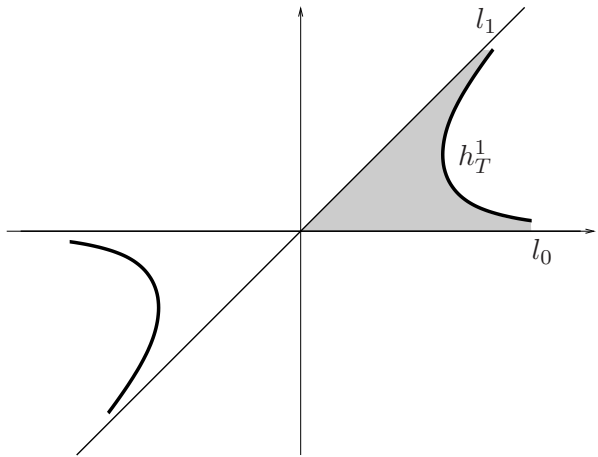}
   \caption{The intersection of the handle with the first $\cc$-factor of $\cc^2$} 
   \label{fig:controlarea}
  \end{center}
\end{figure}

As before, we can deduce this area from the computation of the area when~$\alpha = \frac{\pi}{2}$. In 
this case, $h_T^1$ is the arc of hyperbola in the plane 
given by the equation~$xy = 1$ so that the area in grey is equal to~$2 T + 1$.
This area is equivalent to~$2 T$ when~$T$ goes to~$+ \infty$.
Moreover, when $T$ is large, the smoothing between the original linear Lagrangians 
and the arc of hyperbola is very small so that the area is still equivalent to~$2T$.

The other cases can be obtained from the~$\alpha = \frac{\pi}{2}$ situation by applying the 
linear transformation of the plane~$\cc = \rr^2$ given by the matrix
$$\left( { \begin{array}{cc} 1 & \cos(\alpha) \\0 & \sin(\alpha) \end{array} } \right),$$
so that the area considered above is equivalent to~$2 \sin(\alpha) \, T$  when~$T$ goes to~$\infty$.

In particular, when $\sin (\alpha) = \sin (\beta)$, the condition which ensures the surgery 
to be Lagrangian, the areas between the original Lagrangians and the handle along each 
coordinate are equivalently the same. For~$T$ large enough, we can (and will) do the 
smoothing so that the areas along each $\cc$-factor are equal.

Now, given the conformality property of the surgery, we can make this area as small as 
we want and equal to some~$a(\varepsilon)$ (small) if the surgery is done inside the ball 
of radius~$\varepsilon$.

\subsubsection*{In a general symplectic manifold}

Let~$L_0$ and~$L_1$ be two Lagrangian submanifolds of a symplectic manifold~$W$ intersecting 
transversally at a point~$x_0$.
One can take a Darboux chart~$U_0$ around~$x_0$ symplectomorphic to a ball~$B_0$ endowed with 
the standard symplectic form of~$\cc^2$ such that under the Darboux map, the two Lagrangians
are the intersection of Lagrangian linear subspaces of~$\cc^2$ with the ball. One is then 
in the linear situation from above and can perform the surgery in the ball as described provided
the sines of angles between the restriction of the linear Lagrangians to each factor of~$\cc^2$ 
are the same (up to sign).

\subsection{The interpretation of the gluing along the segment $x + y = \pi \rho_{\max}^2$}
\label{sec:isotropicgluing}

As we shall see in Section~\ref{sec:exotictorus}, the gluing along the segment $x + y = \pi \rho_{\max}^2$ can be interpreted as the Lagrangian surgery along a circle of two Hamiltonian 
isotopic copies of the real part~$\plr$ intersecting along a circle in the $\drc$ at infinity
$$\left\{ {[z_0 : z_1 : z_2] \mid z_2 = 0 }\right\}.$$

This circle is isotropic and, as in the case of the surgery at a point, the surgery along
an isotropic submanifold is a local process that we will now describe.

\subsubsection*{The neighbourhood of an isotropic manifold}

Let~$P$ be a symplectic manifold of dimension~$2n$. 
Let~$N$ be an isotropic submanifold of dimension~$k$. 
In~\cite{MR0464312}, Weinstein noticed that
 the tangent bundle of~$P$  along~$N$ is isomorphic as symplectic vector 
bundle over~$N$ to
$$(TN \oplus TN^*) \oplus SN(N,P),$$
where~$SN(N,P)  = TN^{\bot} / TN$ is called the symplectic normal bundle of~$N$ in~$P$.\\

Conversely, one can embed any manifold which is the base of a symplectic vector bundle as 
an isotropic submanifold of a symplectic manifold such that the tangent bundle looks like this:

\begin{thm}[The existence theorem, Weinstein \cite{MR633631}] Let~$N$ be a manifold of 
dimension~$k$ and~$E \rightarrow N$ a symplectic vector bundle with fibre dimension~$2(n-k), k \leq n$. 
Then~$N$ can be embedded as an isotropic submanifold of a symplectic manifold~$P(E)$ of 
dimension~$2n$ such that the tangent bundle of~$P(E)$ along~$N$ is isomorphic as symplectic 
vector bundle to the sum~$(TN \oplus TN^*) \oplus E$.
\end{thm}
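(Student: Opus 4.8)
The plan is to produce $P(E)$ explicitly as a neighbourhood of the zero section inside the total space of the Whitney sum $\ctg N \oplus E$, and to equip it with a closed $2$-form that is symplectic near $N$ and has the prescribed behaviour along $N$. Write $Z$ for the total space of $\ctg N \oplus E$ and $p \colon Z \to N$ for the bundle projection, so that the zero section gives an embedding $N \hookrightarrow Z$ with $T_x Z = T_x N \oplus T^{\ast}_x N \oplus E_x$ at each $x \in N$.

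First I would build two closed $2$-forms on $Z$. Let $\lambda$ be the canonical Liouville $1$-form on $\ctg N$ and pull back $d\lambda$ along the projection $Z \to \ctg N$ to obtain a closed $2$-form $\omega_1$ on $Z$; along the zero section it realises the canonical pairing between $TN$ and $\ctg N$ and vanishes on the remaining directions. For the $E$-factor I would choose a symplectic connection $\nabla$ on $E$, giving a splitting of $T(E)$ into horizontal and vertical parts with the vertical bundle identified with $p^{\ast} E$. Using the fibrewise linear symplectic forms together with the Euler (radial) vector field in the fibres, one assembles a $1$-form whose exterior derivative is a closed $2$-form $\omega_2$ on the total space of $E$ that restricts to the fibre symplectic form $\omega_E$ on each $E_x$ and whose horizontal and mixed components vanish along the zero section. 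Pulling $\omega_2$ back to $Z$ and setting $\omega = \omega_1 + \omega_2$ produces a closed form.

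The verification then splits into three routine points. Along $N$ the form $\omega$ is block diagonal, equal to the canonical pairing on $TN \oplus \ctg N$ and to $\omega_E$ on $E$; in particular it is non-degenerate there, hence non-degenerate on an open neighbourhood $P(E)$ of the zero section, which is the sought symplectic manifold of dimension $2n$. Restricting $\omega$ to $T_x N$ gives zero because $\omega_1$ pairs $TN$ with $\ctg N$ and $\omega_2$ vanishes on horizontal vectors along $N$, so $N$ is isotropic. Finally, since $\omega$ pairs $TN$ non-degenerately with $\ctg N$ and annihilates $TN$ against both $TN$ and $E$, the symplectic orthogonal is $TN^{\bot} = TN \oplus E$, whence $SN(N, P(E)) = TN^{\bot}/TN \cong E$ and $TZ|_{N} \cong (TN \oplus \ctg N) \oplus E$ as symplectic vector bundles, as claimed.

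The main obstacle is the construction of $\omega_2$: a naive fibrewise symplectic form on the total space of $E$ is not closed, and the role of the symplectic connection is precisely to correct this. The delicate point is to check that the coupling $1$-form built from $\nabla$ and the fibre forms has exterior derivative restricting correctly along the zero section (fibre form on the vertical part, zero on horizontal and mixed parts), so that the block-diagonal structure needed in the last step holds exactly on $N$ rather than merely to leading order; this is the minimal-coupling computation, and it is where the compatibility of $\nabla$ with the fibre symplectic structure is used.
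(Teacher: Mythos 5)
The paper does not prove this statement: it is quoted verbatim from Weinstein's paper, and the authors only remark afterwards that $P(E)$ is the Whitney sum $T^{\ast}N \oplus E$ and that its symplectic structure is not canonical. Your sketch is exactly the construction behind that remark --- the canonical form pulled back from $T^{\ast}N$ plus a minimal-coupling form on $E$ built from a symplectic connection, with nondegeneracy along the zero section propagating to a neighbourhood --- so it is correct and follows essentially the same route as the cited source; the dependence on the choice of connection is precisely why the authors call the structure non-canonical.
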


This space~$P(E)$ is the Whitney sum~$P(E) = \ctg N \oplus E$ as Weinstein explains 
in~\cite{MR0464312}. 
The symplectic structure on~$P(E)$ is not canonical and is described in~\cite{MR633631}.\\

And we have a uniqueness result:

\begin{thm}[Weinstein \cite{MR0464312}] The isotropic manifold theorem: Let~$N$ be a manifold of dimension~$k$. 
Then the extensions of~$N$ to a~$2n$-dimensional symplectic manifold in which~$N$ is isotropic 
are classified, up to local symplectomorphism about~$N$, by the isomorphism classes of~$2(n-k)$-dimensional 
symplectic vector bundles over~$N$.
\end{thm}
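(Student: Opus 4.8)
The plan is to prove this by showing that the symplectic normal bundle $SN(N,P) = TN^{\bot}/TN$ is a complete invariant of the extension up to local symplectomorphism, via a relative Moser argument in the spirit of Weinstein's neighbourhood theorems. First I would check that the assignment $P \mapsto SN(N,P)$ is well defined: since $TN$ is isotropic we have $TN \subset TN^{\bot}$, and $\omega$ descends to a nondegenerate skew form on the quotient $TN^{\bot}/TN$, making it a symplectic vector bundle of rank $2(n-k)$ over $N$. A local symplectomorphism $\phi$ fixing $N$ has differential along $N$ preserving $\omega$ and $TN$, hence $TN^{\bot}$, so it descends to a symplectic bundle isomorphism of the symplectic normal bundles; thus symplectomorphic extensions carry isomorphic symplectic normal bundles. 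Surjectivity onto isomorphism classes of symplectic vector bundles is exactly the content of the existence theorem quoted above, which realises any $E$ as $SN(N,P(E))$ for $P(E) = \ctg N \oplus E$.

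The core is injectivity: given two extensions $(P_1,\omega_1)$ and $(P_2,\omega_2)$ with $SN(N,P_1) \cong SN(N,P_2)$, I must produce a symplectomorphism between neighbourhoods of $N$ restricting to the identity on $N$. By Weinstein's splitting $T_N P_i \cong (TN \oplus TN^{\ast}) \oplus SN(N,P_i)$, the given isomorphism of symplectic normal bundles combines with the canonical isomorphism on the $TN \oplus TN^{\ast}$ factor to yield a symplectic vector bundle isomorphism $\Phi : T_N P_1 \to T_N P_2$ covering $\mathrm{id}_N$ and equal to the identity on $TN$. Using tubular neighbourhood theorems (e.g.\ exponential maps for auxiliary metrics), I would extend $\Phi$ to a diffeomorphism $f$ of neighbourhoods of $N$ that fixes $N$ pointwise and whose differential along $N$ is $\Phi$. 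By construction $f^{\ast}\omega_2$ and $\omega_1$ then agree as bilinear forms on $T_N P_1$ at every point of $N$.

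Finally I would run the relative Moser trick on $\omega_t = \omega_1 + t\,(f^{\ast}\omega_2 - \omega_1)$, $t \in [0,1]$. Both forms are closed and coincide along $N$, so $\tau := f^{\ast}\omega_2 - \omega_1$ is closed and vanishes on $T_N P_1$ over $N$; the relative Poincar\'e lemma then gives a primitive $\tau = d\sigma$ with $\sigma$ vanishing along $N$. Since $\omega_1$ is nondegenerate and $\omega_t|_N = \omega_1|_N$ for all $t$, and nondegeneracy is an open condition, each $\omega_t$ is symplectic on a (possibly smaller) neighbourhood of $N$; I then solve $\iota_{X_t}\omega_t = -\sigma$ for a time-dependent vector field $X_t$ vanishing along $N$, and integrate it to an isotopy $\psi_t$ fixing $N$ with $\psi_1^{\ast}(f^{\ast}\omega_2) = \omega_1$. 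The composite $f \circ \psi_1$ is the desired local symplectomorphism about $N$.

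I expect the main obstacle to be the relative normalisation: arranging that the primitive $\sigma$, and hence the Moser vector field $X_t$, vanishes along $N$ so that the flow fixes $N$ pointwise and remains defined on a uniform neighbourhood of $N$ for all $t \in [0,1]$. This is precisely where the condition $\omega_t|_N = \omega_1|_N$, guaranteed by the careful choice of $\Phi$ and $f$, must be exploited; establishing the correct order of vanishing of $\sigma$ along $N$ is the delicate point that makes the whole argument go through.
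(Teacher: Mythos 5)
The paper offers no proof of this statement: it is quoted as Weinstein's isotropic manifold theorem from~\cite{MR0464312} and used as a black box, so there is no in-paper argument to compare against. Your sketch reproduces Weinstein's original strategy --- the symplectic normal bundle $TN^{\bot}/TN$ as the complete invariant, surjectivity from the existence theorem, and injectivity by extending a symplectic bundle isomorphism $\Phi : T_N P_1 \to T_N P_2$ (identity on $TN$) to a diffeomorphism $f$ and then running the relative Moser trick with a primitive of $f^{\ast}\omega_2 - \omega_1$ vanishing along $N$ --- and the outline is correct; the only point to flag is that the splitting $T_N P \cong (TN \oplus TN^{\ast}) \oplus SN(N,P)$ is not canonical, so you must fix compatible choices (e.g.\ via compatible almost complex structures) on both sides before asserting that $\Phi$ is symplectic and covers the identity.
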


This means that if~$N$ is an isotropic submanifold of a symplectic manifold~$P$, 
then a neighbourhood of~$N$ in~$P$ is symplectomorphic to a neighbourhood of the embedding of~$N$ 
in~$P(E)$ for~$E = SN(N,P)$.

We will extend the surgery at an intersection point of two Lagrangian submanifolds to the 
surgery of two Lagrangian submanifolds intersecting cleanly (in the sense of 
Pozniak~\cite{MR1736217}) along an isotropic submanifold. 
In his thesis, Pozniak proves 
\begin{thm}[Pozniak \cite{MR1736217}] If two Lagrangian submanifolds~$L_0$ and~$L_1$ 
of a symplectic manifold~$P$ intersect cleanly along~$N$, that 
is if~$N = L_0 \cap L_1$ and for each~$x \in N$, $T_x N = T_x L_0 \cap T_x L_1$, then there 
exists a vector bundle~$L \longrightarrow N$ such that a neighbourhood of~$N$ in~$P$ is 
symplectomorphic to a neighbourhood of~$N$ in~$T^*L$, $L_0$ being mapped to the zero section
of~$T^*L$ and~$L_1$ to the conormal of~$N$ in~$T^*L$.
\end{thm}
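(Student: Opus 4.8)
The plan is to put the pair $(L_0,L_1)$ into a linear normal form along $N$, build from it a first-order model diffeomorphism, and then upgrade this to a symplectomorphism by a relative Moser argument engineered to preserve both Lagrangians at once.

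First I would analyse the configuration pointwise. For $x\in N$ the clean intersection hypothesis gives two Lagrangian subspaces $T_xL_0,T_xL_1\subset(T_xP,\omega_x)$ with $T_xL_0\cap T_xL_1=T_xN$. Passing to the symplectic normal bundle $SN(N,P)=TN^{\perp}/TN$, the images $\Lambda_0:=(TL_0\cap TN^{\perp})/TN$ and $\Lambda_1:=(TL_1\cap TN^{\perp})/TN$ are transverse Lagrangian subbundles, so $SN(N,P)=\Lambda_0\oplus\Lambda_1$ and $\omega$ identifies $\Lambda_1\cong\Lambda_0^{\ast}$. I set $L:=\Lambda_0$, a vector bundle over $N$ of rank $n-k$. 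In local coordinates $(q,r)$ on the total space of $L$ (with $q$ along $N$ and $r$ along the fibre) and fibre-dual coordinates $(p,s)$ on $\ctg L$, the zero section is $\{p=s=0\}$ and the conormal of $N=\{r=0\}\subset L$ is $\{r=0,\ p=0\}$; a direct check shows that these two Lagrangians meet cleanly along $N$ and that their tangent spaces along $N$ reproduce $TL_0,TL_1$ under the identification $SN(N,\ctg L)=L\oplus L^{\ast}=SN(N,P)$. Thus $\ctg L$ is the correct linear model and $L$ is the bundle claimed in the statement.

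Next I would produce a diffeomorphism $\Psi$ from a neighbourhood of $N$ in $\ctg L$ onto a neighbourhood of $N$ in $P$ that is the identity on $N$, carries the zero section into $L_0$ and the conormal into $L_1$, and whose differential along $N$ intertwines the tautological form $\omega_0$ with $\omega$; the identifications of the previous paragraph, together with a tubular-neighbourhood (or Weinstein isotropic) construction, make such a $\Psi$ available. Setting $\omega_1:=\Psi^{\ast}\omega$, I then run Moser's method for $\omega_t:=(1-t)\omega_0+t\omega_1$. Both forms are closed, they agree along $N$, and after shrinking they are nondegenerate near $N$ for all $t$, so $\omega_t$ is a path of symplectic forms. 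Crucially, $\omega_0$ and $\omega_1$ each restrict to zero on the zero section and on the conormal (these are Lagrangian for $\omega_0$ by construction and for $\omega_1$ because $L_0,L_1$ are Lagrangian for $\omega$); hence $\sigma:=\omega_1-\omega_0$ pulls back to zero on both of them.

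The main obstacle, and the point where a naive Moser argument fails, is to retain \emph{both} Lagrangians under the correcting isotopy: an arbitrary primitive of $\sigma$ produces a symplectomorphism but disturbs the two submanifolds. The key observation is that both model Lagrangians are conical. Let $Y=p\,\partial_p+s\,\partial_s$ be the fibrewise Liouville field of $\ctg L$, whose backward flow $\phi_\tau$ retracts a neighbourhood onto the zero section; since $Y$ is tangent to $\{p=s=0\}$ and to $\{r=0,\ p=0\}$, and $\sigma$ restricts to zero on each, the homotopy operator $\alpha:=\int_{-\infty}^{0}\phi_\tau^{\ast}(\iota_Y\sigma)\,d\tau$ gives a primitive $d\alpha=\sigma$ whose restrictions to the zero section and to the conormal both vanish: on each of these submanifolds the integrand equals $\sigma$ evaluated on two vectors tangent to it, hence zero. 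Solving $\iota_{X_t}\omega_t=-\alpha$ then produces a vector field vanishing on $N$ and, because $\alpha|_{TL_i}=0$ while $L_i$ stays Lagrangian for every $\omega_t$, tangent to both the zero section and the conormal. Its time-one flow $\psi_1$ therefore preserves these two Lagrangians and satisfies $\psi_1^{\ast}\omega_1=\omega_0$, so that $\Phi:=\Psi\circ\psi_1$ is a symplectomorphism of a neighbourhood of $N$ in $\ctg L$ onto one in $P$ carrying the zero section to $L_0$ and the conormal to $L_1$, as required. I expect the only genuinely delicate verifications to be the convergence and smoothness of $\alpha$ near $N$ and the simultaneous tangency of $X_t$ to both Lagrangians, both of which are controlled by the conical structure encoded in $Y$.
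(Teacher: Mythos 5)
The paper does not prove this statement: it is quoted verbatim from Pozniak's thesis \cite{MR1736217} and used as a black box, so there is no internal proof to compare yours against. Judged on its own terms, your argument is the standard Weinstein-style route and is essentially sound. The linear analysis is right: since $N\subset L_i$ and $L_i$ is Lagrangian, $TL_i\subset TN^{\bot}$ automatically, the images $\Lambda_i=TL_i/TN$ are transverse Lagrangian subbundles of $SN(N,P)$, and $\omega$ gives $\Lambda_1\cong\Lambda_0^{\ast}$, so $L=\Lambda_0$ is the correct bundle and the zero section/conormal pair in $\ctg L$ is the correct linear model (this matches the discussion the paper itself gives just after the statement). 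The Moser step is also correct and is the genuinely clever part: the fibrewise Liouville field of $\ctg L$ is tangent to every conormal and vanishes on the zero section, so the homotopy-operator primitive $\alpha$ of $\sigma=\omega_1-\omega_0$ annihilates the tangent spaces of both model Lagrangians, and then $X_t$ defined by $\iota_{X_t}\omega_t=-\alpha$ is tangent to each of them precisely because each is $\omega_t$-Lagrangian, so its flow preserves both while correcting the form.

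The one place you lean on something nontrivial without establishing it is the existence of the first-order model diffeomorphism $\Psi$. You need a single diffeomorphism of a neighbourhood of $N$ that simultaneously carries the zero section onto $L_0$ \emph{and} the conormal onto $L_1$, with prescribed derivative along $N$; this is the relative (simultaneous) tubular neighbourhood theorem for a pair of submanifolds intersecting cleanly along $N$, and it is exactly where the clean-intersection hypothesis enters at the nonlinear level. It is true and standard (e.g.\ choose a metric making both $L_0$ and $L_1$ totally geodesic near $N$ and use its exponential map, or straighten $L_0$ first and then isotope $L_1$ rel $L_0$), but as written it is asserted rather than proved, and a complete write-up should either prove it or cite it explicitly. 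With that lemma supplied, your proof is complete.
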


In this setting, identifying~$L_0$ and~$L_1$ with their image in~$T^*L$, one can see 
that~$E = SN(N,T^*L)$ is isomorphic to the Whitney sum of the vector 
bundles~$L \rightarrow N$ and~$L^* \rightarrow N$ and that in the Whitney sum 
$P(E) = T^*N \oplus E$, the Lagrangian~$L_0$ is mapped to the zero section 
in the~$T^*N$-summand and to~$L \oplus \{0\}$ in the~$E$-summand and the 
Lagrangian~$L_1$ is mapped to the zero section 
in the~$T^*N$-summand and to~$\{0\} \oplus L^*$ in the~$E$-summand, so that 
the intersection of~$L_0$ and~$L_1$ is the sum of the zero-section of~$T^*N$ 
and the transverse intersection in each fibre of~$E$ of~$L_x \oplus \{0\}$ 
with~$\{0\} \oplus L_x^*$.

The surgery we will construct in this neighbourhood will also fiber over~$N$, be equal 
to the zero-section in the~$T^*N$-summand and will resolve the intersection in 
each fiber of~$E$, so that it is enough to define it in the symplectic normal 
bundle~$E$.

\subsubsection*{The bundle surgery}

In this paper, the constructions are done only in real dimension~$4$ with the clean 
intersection of two Lagrangians along an isotropic circle. 
The symplectic normal bundle~$E = SN(N,T^*L)$ is then a rank~$2$ symplectic vector bundle. 
There exists only 
one rank~$2$ symplectic vector bundle over the circle, the trivial bundle $\cerc \times \cc$.
However, the rank $1$ Lagrangian subbundle~$L \rightarrow N$ can be the trivial line 
bundle or the non-orientable line bundle over the circle. 
So in real dimension~$4$ one will be in one of the following case:
\begin{itemize}
\item either~$E$ is~$\cerc \times \cc$ and~$L \rightarrow N$ is~$\cerc \times \rr$, 
the trivial real subbundle;
\item or~$E$ can be described as $[0,1] \times \cc$ identifying the fiber at~$0$ 
and the fiber at~$1$ by multiplication by~$-1$ 
and~$L \rightarrow N$ is the associated subbundle $[0,1] \times \rr$ with the same 
identification. 
\end{itemize}

Now if the restriction of the Lagrangian~$L_1$ to~$E$ is the associated subbundle with 
fiber~$e^{i \alpha} \rr$ as it will be the case in our examples, 
one can perform a surgery in dimension~$1$ parametrized in each fiber as
$$\left \{ { \left. e^{-t} x + e^{t}  e^{i \alpha} x \right|   
 t \in [-T,T], x \in \rr, x^2 = \varepsilon_1^2  } \right \}$$
 or
 $$\left \{ { \left. e^{-t} x + e^{t}  e^{i \alpha} (-x) \right|   
 t \in [-T,T], x \in \rr, x^2 = \varepsilon_1^2  } \right \},$$
 followed by a smoothing at the end.
 
Now note that in this situation, the restrictions of~$L_0$ and~$L_1$ are invariant 
by multiplication by~$-1$, so the handles and the smoothing can be made invariant as well. 
Therefore, in both cases, the trivial and the non-trivial symplectic bundle over~$\cerc$, 
the change of trivialisation preserves the construction so that the handle can be defined 
globally and fibers over~$\cerc$.

\section{Construction of new monotone Lagrangian submanifolds using the surgery at a point}
\label{sec:Constructionwithsurgeryatapoint}

\subsection{A non-orientable monotone Lagrangian in~$\plc$}

In the following sections, we explain how to get via a Lagrangian surgery on two copies 
of~$\plr$ a monotone Lagrangian connected sum of a Klein bottle and an orientable surface of 
genus two in~$\plc$.

\subsubsection{The construction}
\label{sec:KSigma-construction}

We will take two copies of~$\plr$ in~$\plc$ that intersect in three points exactly, 
the three points of~$\plc$ projecting on the three corners of the image of the moment 
map.
Let us consider:
$$L_0 = \{ \left. [x_0:x_1:x_2] \in \plc \right|  x_0,x_1,x_2 \in \rr \}$$
and
$$L_1 = \{ \left. [e^{i \frac{\pi}{3}} x_0: e^{- i \frac{\pi}{3}} x_1:x_2] \in \plc \right|  
x_0,x_1,x_2 \in \rr \}.$$
These are two copies of $\plr$ in $\plc$, $L_1$ being obtained from $L_0$ by a 
Hamiltonian isotopy.
Indeed, $L_1$ is the image of $L_0$ under the map given by the action of the 
following diagonal matrix on the two first coordinates of the homogeneous coordinates:
$$A = \left ( { 
\begin{array}{cc}
e^{i \frac{\pi}{3}} & 0  \\
0 & e^{-i \frac{\pi}{3}} 
\end{array}
} \right).
$$
It is the time-one map of the transformation given by 
$$A_t = \left ( { 
\begin{array}{cc}
e^{i t \frac{\pi}{3}} & 0  \\
0 & e^{-i t \frac{\pi}{3}} 
\end{array}
} \right)
$$
$A_t \in SU(2)$ and it defines a Hamiltonian diffeomorphism $\Phi_t$ of $\plc$.

One can check that these two copies of $\plr$ intersect in the three 
points $[0:0:1]$, $[0:1:0]$, $[1:0:0]$.

We want to perform a Lagrangian surgery at each intersection point 
as described in Section~\ref{sec:surgeryatapoint}. Let us give the choices 
of handles we make for the construction.

At $[0:0:1]$, the local chart is 
$$[z_0:z_1:z_2] \mapsto \left( \frac{z_0}{z_2}, \frac{z_1}{z_2} \right),$$ 
so that locally, $L_0$ is the real plane 
$$l_0 = \{(x_0,x_1) | \, x_0,x_1 \in \rr\}$$ 
and $L_1$ is 
$$l_1 = \{(e^{i \frac{\pi}{3}} x_0, e^{- i \frac{\pi}{3}} x_1) | \, x_0, x_1 \in \rr\}.$$
We are in the case when $\sin (\frac{\pi}{3}) = - \sin (- \frac{\pi}{3})$, so that we need the modified version of the 
handle 
to do the Lagrangian surgery. 
We will use the one defined by the smoothing of:
$$\left \{ { \left. e^{-t} \left( { \begin{array}{c} x_0\\x_1 \end{array} } \right) 
+ e^{t} \left( { \begin{array}{c} e^{i \frac{\pi}{3}} x_0\\ 
e^{-i \frac{\pi}{3}} (-x_1) \end{array} } \right) 
\right|   \begin{array}{c} t \in \rr \\x_0^2+x_1^2 = \varepsilon_1^2 \end{array} } \right \}.$$

At $[0:1:0]$, the local chart is 
$$[z_0:z_1:z_2] \mapsto \left( \frac{z_0}{z_1}, \frac{z_2}{z_1} \right),$$ so that 
locally, $L_0$ is the real plane 
$$l_0 = \{(x_0, x_2) | \, x_0, x_1 \in \rr\}$$ 
and $L_1$ is
$$l_1 = \{(e^{i \frac{2\pi}{3}} x_0, e^{i \frac{\pi}{3}} x_2) | \, x_0, x_2 \in \rr\}.$$
As $\sin (2\pi/3) = \sin (\pi/3)$, we can use the first description of the handle to define 
the Lagrangian surgery:
$$\left \{ { \left. e^{-t} \left( { \begin{array}{c} x_0\\x_2 \end{array} } \right) 
+ e^{t} \left( { \begin{array}{c} e^{i \frac{2\pi}{3}} x_0\\ e^{i \frac{\pi}{3}} x_2 \end{array} } \right) 
\right|   \begin{array}{c} t \in \rr \\x_0^2+x_2^2 = \varepsilon_1^2 \end{array} } \right \}.$$

At $[1:0:0]$, the local chart is 
$$[z_0:z_1:z_2] \mapsto \left( \frac{z_1}{z_0}, \frac{z_2}{z_0} \right),$$ 
so that locally, $L_0$ is the real plane 
$$l_0 = \{(x_1,x_2) | \, x_1,x_2 \in \rr\}$$
 and $L_1$ is 
$$l_1 = \{(e^{- i \frac{2\pi}{3}}x_1, e^{- i \frac{\pi}{3}} x_2) | \, x_1, x_2 \in \rr\}.$$
As $\sin (-2\pi/3) = \sin (-\pi/3)$, we can also use the first description of the handle 
to do the Lagrangian surgery. But for the monotonicity condition to be satisfied in 
Section~\ref{sec:KSigma-monotonicity}, we will use instead the smoothing of the~$h'$-handle:
$$\left \{ { \left. e^{-t} \left( { \begin{array}{c} x_1 \\ x_2 \end{array} } \right) 
+ e^{t} \left( { \begin{array}{c} e^{- i \frac{2\pi}{3}} (-x_1) \\ e^{-i \frac{\pi}{3}} (-x_2) \end{array} } \right) 
\right|   \begin{array}{c} t \in \rr \\x_1^2+x_2^2 = \varepsilon_1^2 \end{array} } \right \}.$$

After these surgeries, the projection of the Lagrangian~$L$ we constructed will be 
contained in the polytope obtained from the polytope of~$\plc$ by cutting the three vertices 
as in Figure~\ref{fig:PolytopeKSigma2}.

\begin{figure}[htbp]
  \begin{center}
   \psfrag{3}{$3$}
   \psfrag{eps1}{$\varepsilon_1^2$} 
   \psfrag{3-eps1}{$3-\varepsilon_1^2$} 
   \includegraphics[height=6cm]{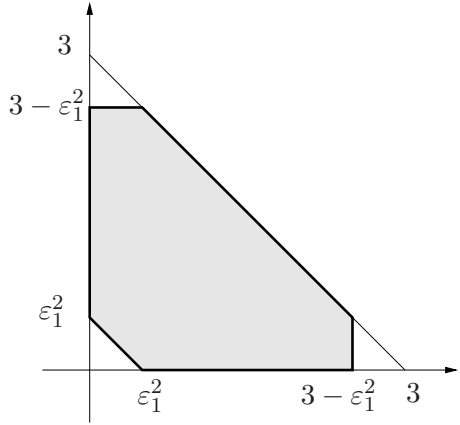}
   \caption{The image of~$L$ under the moment map is contained in and smoothly approximates the shaded polytope.} 
   \label{fig:PolytopeKSigma2}
     \end{center}
\end{figure}

The choice of these copies of~$\plr$ and these surgeries is motivated by the monotonicity 
condition we aim to prove for this construction in the next section. 
Let us describe the restriction 
of the surgery along the coordinate-$\drc$s, that is the projective lines which are the 
preimages of the edges on the boundary of the moment polytope and can be defined by the 
vanishing of one of the homogeneous coordinates. We will describe the case of 
$$\{[z_0 : z_1 : z_2] \mid z_0 = 0 \},$$
the other coordinate-$\drc$s being similar.

Along the sphere~$z_0 = 0$, $L_0$ and~$L_1$ are two circles intersecting transversally 
at the north and the south pole. 
Locally in the chart~$\cc$ at~$[0:0:1]$, we have~$L_0$ on the real axis and~$L_1$ on 
the axis~$e^{-i \frac{\pi}{3}} \rr$. 
In this chart, the intersection of the chosen surgery with~$\{z_0 = 0\}$ consists in two curves 
(see Figure~\ref{fig:En001}-left): inside a ball centered at the origin and of 
area~$\varepsilon$ 
they lie in two opposite "quadrants" defined by these two axes, 
that is in the quadrants making an angle of~$2 \frac{\pi}{3}$.

\begin{figure}[htbp]
  \begin{center}
   \psfrag{L_0}{$\scriptstyle{L_0}$}
   \psfrag{L_1}{$\scriptstyle{L_1}$}
   \psfrag{L}{$\scriptstyle{L}$}   
   \includegraphics[height=4cm]{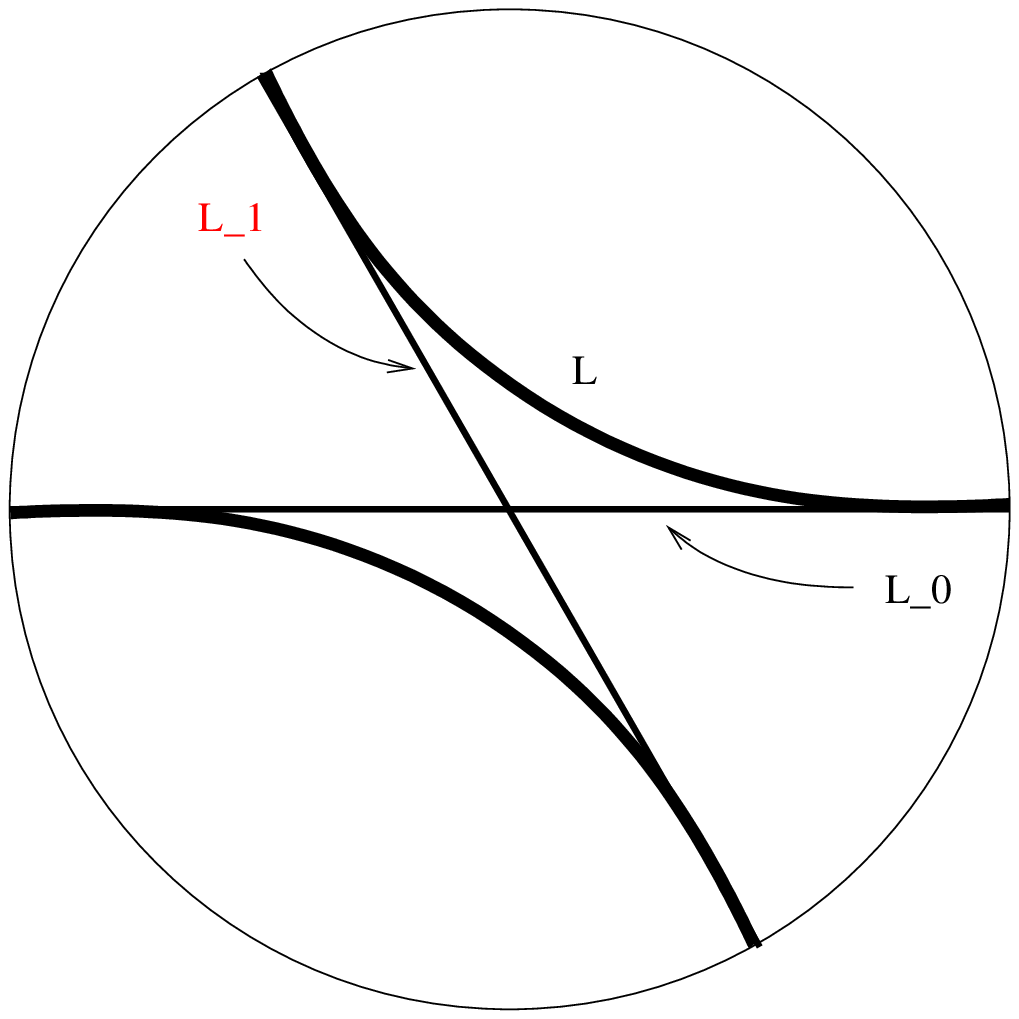} \hspace{2.5cm} \includegraphics[height=4cm]{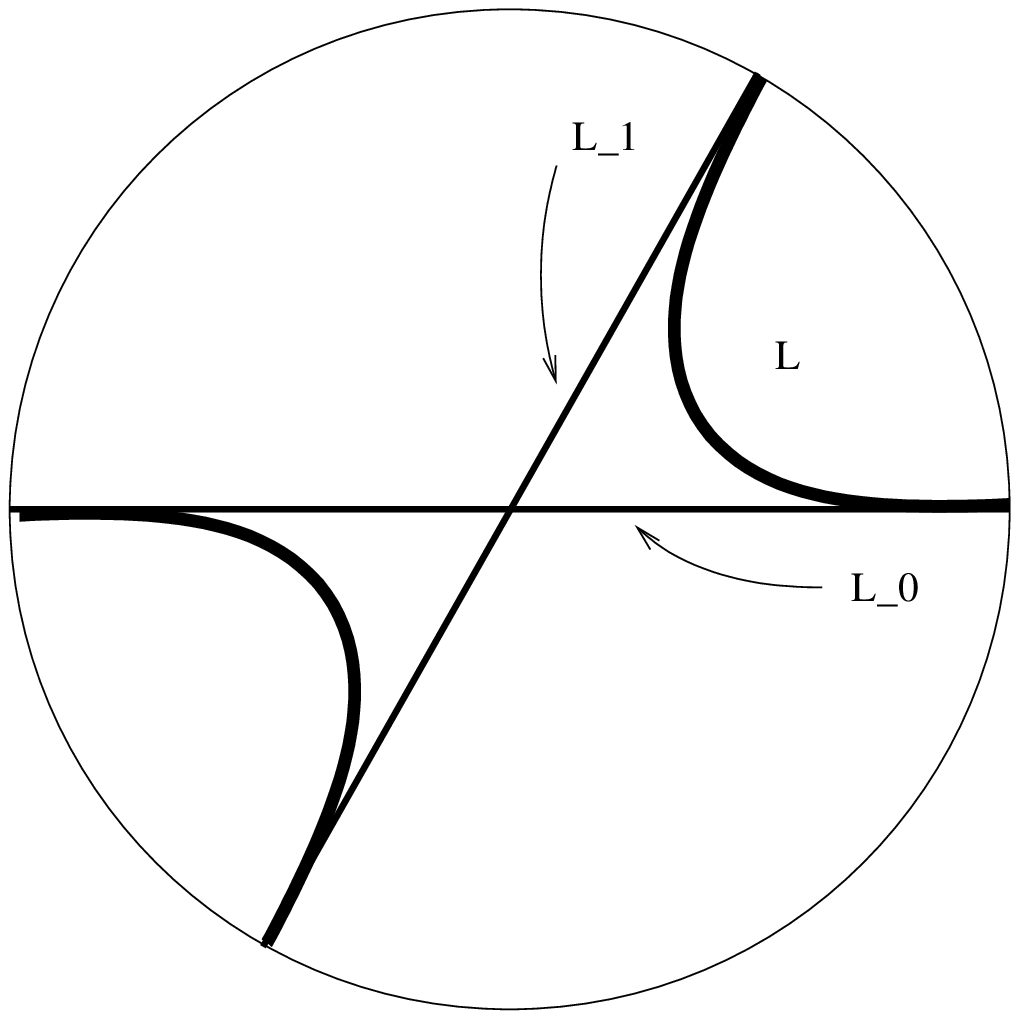} 
   \caption{The surgery along~$z_0 = 0$ in the chart at~$[0:0:1]$ (on the left) 
   and in the chart at~$[0:1:0]$ (right)} 
   \label{fig:En001}
     \end{center}
\end{figure}

Locally at~$[0:1:0]$ we have a similar picture but with curves in the quadrants 
making an angle of~$\frac{\pi}{3}$ (see Figure~\ref{fig:En001}-right).

Away from the small neighbourhoods where we do the surgery, namely on the part where 
we glue 
the two charts, the restriction of~$L$ is the restriction of~$L_0$ and~$L_1$ to this 
complex projective line. 
One sees then that the restriction of~$L$ to this~$\drc$ is one circle joining~$L_0$ 
and~$L_1$ through the two handles constructed at each intersection point and looking 
like the seam of a tennis ball (see Figure~\ref{fig:Tennis}). 

\begin{figure}[htbp]
  \begin{center}
   \psfrag{L_0}{$\scriptstyle{L_0}$}
   \psfrag{L_1}{$\scriptstyle{L_1}$}
   \includegraphics[height=6cm]{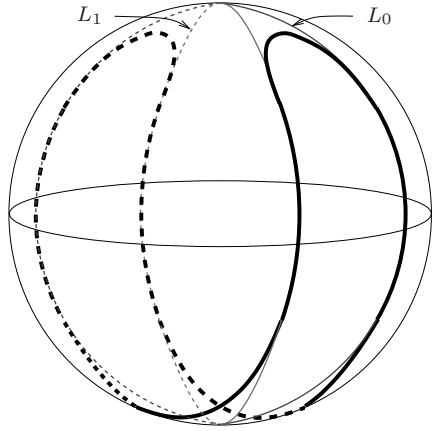}
   \caption{The intersection of the surgery with~$\{z_0 = 0\}$} 
   \label{fig:Tennis}
     \end{center}
\end{figure}

If we had chosen the first handle we described 
in Section~\ref{sec:surgeryatapoint} at~$[0:0:1]$, the restriction of~$L$ to~$z_0 = 0$ 
would have been the union of two circles. 

Actually, the choice of handles we made is such that in each of the other 
coordinate-$\drc$s (namely~$z_1 = 0$ and~$z_2 = 0$), the restriction of~$L$ 
is one circle joining~$L_0$ and~$L_1$ through the two handles constructed at each 
intersection point at the poles of~$\drc$. 
Indeed, in the case of two circles in the intersection with a coordinate-$\drc$, one cannot 
expect to satisfy the monotonicity condition. 
Note however that as the two intersecting Lagrangians~$L_0$ and~$L_1$ are not orientable, 
it follows from~\cite[Proposition 2]{MR1097259} that the topology of the Lagrangian 
we obtain after surgeries does not depend on the choice of handles.

\subsubsection{Monotonicity}
\label{sec:KSigma-monotonicity}

Let us normalize the symplectic form on~$\plc$ such that the area of a projective 
line is~$3$:
$$\int_{\drc} \omega = 3.$$

With this normalization, $\plc$ is monotone with monotonicity constant~$1$:
$$\forall v \in H_2(\plc), \int_v \omega = c_1(T\plc)(v)$$
and any monotone Lagrangian submanifold~$L$ will have monotonicity constant equal 
to half the monotonicity constant of~$\plc$ (see~\cite{Oh_I}), namely~$\frac{1}{2}$:
$$\forall u \in H_2(\plc, L), \int_u \omega = \frac{1}{2} \, \mu_L(u),$$
where~$\mu_L$ is the Maslov class of~$L$.

\begin{thm}
The construction of Section~\ref{sec:KSigma-construction} produces a monotone Lagrangian 
embedding of the connected sum of a Klein bottle and 
a compact orientable surface of genus~$2$ in~$\plc$.
\end{thm}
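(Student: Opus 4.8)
First I would check that the output $L$ of the three surgeries is a smoothly embedded closed surface: the three intersection points admit disjoint Darboux balls, and inside each the construction of Section~\ref{sec:surgeryatapoint} replaces the two intersecting Lagrangian disks by a single smooth embedded Lagrangian handle, while $L$ agrees with $L_0 \cup L_1$ outside these balls. Thus $L$ is obtained from the disjoint union $L_0 \sqcup L_1$ of two copies of $\plr$ by deleting, at each intersection point, one small disk from each sheet and gluing in a tube. Each handle joins $L_0$ to $L_1$, so $L$ is connected; each punctured $\plr$ still contains a M\"obius band, so $L$ is non-orientable; and an Euler characteristic count gives $\chi(L)=\chi(\plr)+\chi(\plr)-3\cdot 2 = 1+1-6=-4$, since at each point one removes two disks and inserts a $\chi=0$ annulus. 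By the classification of closed surfaces, the unique non-orientable surface with $\chi=-4$ is the connected sum of a Klein bottle and a genus-$2$ surface, so this already identifies the diffeomorphism type (the irrelevance of the handle choices here is exactly the cited \cite[Proposition~2]{MR1097259}). It then remains only to prove monotonicity.

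\textbf{Reduction of monotonicity to generators.} With the normalization $\int_{\drc}\omega=3$ I would introduce the \emph{defect} homomorphism $m\colon H_2(\plc,L)\to\rr$, $m(u)=\int_u\omega-\tfrac12\mu_L(u)$; both terms are homomorphisms, so monotonicity is the single assertion $m\equiv 0$. On $v=k[\drc]\in\mathrm{im}\,(H_2(\plc)\to H_2(\plc,L))$ one has $\int_v\omega=3k$ and $\mu_L(v)=2c_1(v)=6k$, hence $m$ vanishes there. Since $L$ is closed non-orientable, $H_2(L)=0$, so the long exact sequence reads $H_2(\plc)\to H_2(\plc,L)\xrightarrow{\partial} H_1(L)\to 0$, and $m$ factors as $m=\bar m\circ\partial$ for a well-defined $\bar m\colon H_1(L)\to\rr$. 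As $\rr$ is torsion-free, $\bar m$ automatically kills the torsion of $H_1(L)$ and is determined by its values on the free part $H_1(L)/\mathrm{torsion}\cong\zz^5$. Hence it suffices to produce relative classes whose boundaries span $H_1(L)\otimes\rr$ and to check $\int_u\omega=\tfrac12\mu_L(u)$ on each.

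\textbf{The generating disks and their invariants.} For the generating boundaries I would take the two core loops $\delta_0\subset L_0$, $\delta_1\subset L_1$ (each the nontrivial loop $\drr$ of a copy of $\plr$) together with the three ``tennis-ball seams'' $c_i=L\cap\{z_i=0\}$, giving $2+3=5$ circles, whose independence in $H_1(L)\otimes\rr$ I would verify. The classes $\delta_j$ bound ``half-line'' disks inherited from $L_j\cong\plr$: a projective line meeting $L_j$ along $\drr$ is cut into two disks of area $3/2$ and Maslov index $3$, which satisfy $3/2=\tfrac12\cdot 3$ since each $\plr$ is monotone with constant $\tfrac12$ (see \cite{Oh_I}). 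The new classes are the seam disks: each $c_i$ cuts the line $\{z_i=0\}\cong\drc$ into two disks. Their areas I would read from the Fubini--Study lune areas, the two real circles $L_0\cap\{z_i=0\}$ and $L_1\cap\{z_i=0\}$ meeting at angles $\pi/3$ and $2\pi/3$ and thus splitting the line of area $3$ into lunes of areas $\tfrac12,1,\tfrac12,1$, to which the surgery contributes the controllable increments $a(\varepsilon)$ of Section~\ref{sec:surgeryatapoint}; which lunes are enclosed is fixed by the handle choices $h$ versus $h'$ at each corner. Their Maslov indices I would obtain from the behaviour of the Maslov class under Polterovich's surgery (\cite{MR1097259}), tracking the rotation of the Lagrangian frame through each handle, where the angles $\pm\pi/3,\pm2\pi/3$ enter.

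\textbf{Balancing, and the main difficulty.} Finally I would verify $\int_u\omega=\tfrac12\mu_L(u)$ on these five generators. The whole point of the construction is that the value $\pi/3$ for the rotation angle (which also places $A_t$ in $SU(2)$, making the isotopy Hamiltonian), combined with the prescribed choice of handle at each of the three corners, is what makes the areas and the Maslov indices balance \emph{simultaneously} along all three coordinate lines, once the small parameter $\varepsilon$ is fixed so that $a(\varepsilon)$ realizes the single value the relation forces. The main obstacle is precisely this last step: correctly computing the Maslov indices of the seam disks, which pass through the Lagrangian handles rather than lying on a smooth $\plr$, and confirming that the five chosen circles really generate $H_1(L)\otimes\rr$ so that checking the relation on them suffices. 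The remaining bookkeeping — the lune areas and the homological reduction — is routine once these two points are settled.
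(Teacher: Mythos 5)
Your identification of the diffeomorphism type (via $\chi(L)=-4$ and non-orientability) and your reduction of monotonicity to a check on classes spanning $H_1(L)\otimes\rr$ are both sound and essentially match the paper. The genuine gap is your choice of spanning set. Write $p_1,p_2,p_3$ for the meridian circles of the three handles, i.e.\ the boundaries of the small disks removed from $L_0$ (equivalently from $L_1$). In $H_1(L)$ one has $2\delta_j=\pm p_1\pm p_2\pm p_3$ for $j=0,1$, because the boundary of a M\"obius band is twice its core; hence $\delta_0$ and $\delta_1$ together span at most a $2$-dimensional (and, since $\delta_0\mp\delta_1$ is the $\zz/2$-torsion class, in fact only a $1$-dimensional) subspace of $\langle p_1,p_2,p_3\rangle_\rr$. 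Moreover each seam $c_i$ is $2$ times a primitive ``between-handle'' loop, and the three such loops satisfy one linear relation modulo the $p_i$ and $\delta_j$. Putting this together, $\langle\delta_0,\delta_1,c_1,c_2,c_3\rangle_\rr$ has rank at most $4$ inside $H_1(L)\otimes\rr\cong\rr^5$, so verifying the monotonicity relation on your five classes does not establish it on all of $H_2(\plc,L)$. The missing generators are exactly the handle meridians $p_i$: the paper includes them in its generating set and disposes of them for free, since each bounds the Lagrangian disk that was cut out of $L_0$ to build $L$, on which both the area and the Maslov class vanish. This is an easy repair, but without it the argument is incomplete.

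Two further points of comparison. The paper deliberately avoids the full seams: since $c_i$ is divisible by $2$ in $H_1(L)$, it constructs primitive loops traversing each of the two relevant handles only once, closed up by a half-circle inside $L_0$ off the coordinate line, and shows in Lemmas~\ref{lem:u-area} and~\ref{lem:u-Maslov} that the disk so bounded has area $\tfrac12$ and Maslov index $1$; over $\rr$ your use of $c_i$ in place of these is legitimate, but only because you tensored with $\rr$, and this does not cure the rank deficiency above. Finally, you correctly identify the Maslov computation through the handles as the main difficulty, but you do not carry it out, and it cannot be delegated to the general behaviour of the Maslov class under Polterovich surgery: the paper performs it by trivializing $T\plc$ over the entire disk in the single chart at $[0:0:1]$, rewriting the handle at $[0:1:0]$ in that chart, and tracking the Lagrangian frame explicitly, with the whole degree-$1$ contribution coming from the second handle and depending on the specific choices of $h$ versus $h'$ at the three corners. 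That computation, together with the correct generating set, is the substance of the proof.
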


\begin{proof}
Topologically, the surgery at a point between two copies of the real projective plane gives 
the connected sum of these two spaces, namely a Klein bottle. Then attaching a 
$2$-dimensional handle corresponds to a connected sum with a torus, so that the 
Lagrangian submanifold constructed in~\ref{sec:KSigma-construction} is diffeomorphic 
to the connected sum 
$$L \cong \plr \# \plr \# \TT^2 \# \TT^2 \cong \ K \# \Sigma_2$$ 
where~$K$ is a Klein bottle and~$\Sigma_2$
is a compact orientable surface of genus~$2$.

We know that~$H_2(\plc, L) \cong H_2(\plc) \oplus H_1(L)$ as~$H_2(L) = 0$.  
Let us examine the monotonicity condition on each factor of this direct sum.

On the factor~$H_2(\plc)$, we already have the monotonicity condition from the one 
on~$\plc$ so that we only need to verify the monotonicity condition on the disks 
representing generators of~$H_1(L)$.
This means also that for a given generator of~$H_1(L)$, it is enough to satisfy 
the monotonicity 
condition for one choice of disk with boundary this generator, as the 
symplectic invariants for another disc with the same boundary will differ by the 
invariants coming from a sphere (the sphere obtained by gluing the two disks 
along their boundary) where the condition is already verified.

Now~$H_1(L) = \zz^5 \oplus \zz/2$ can be generated by the two loops generating the
first homology group of each copy of~$\plr$, the three loops inside each handle 
generating the homology of the handle and three loops "between" the handles 
(see Figure~\ref{fig:3loops}). 

\begin{figure}[htbp]
  \begin{center}
   \psfrag{RP2}{$\plr$}
   \includegraphics[height=8cm]{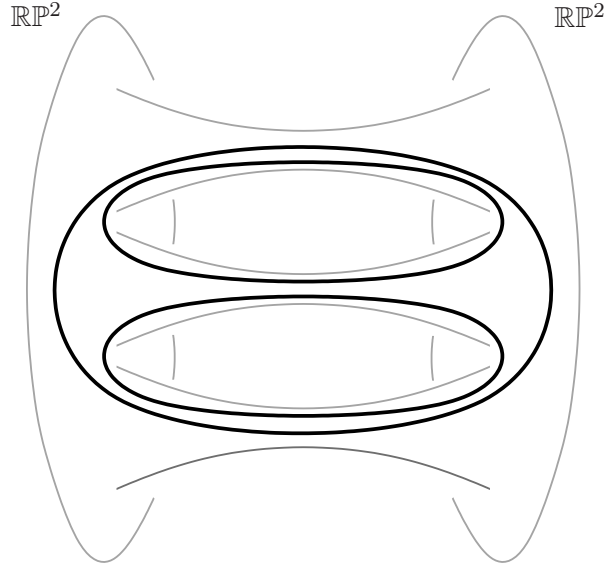}
   \caption{Three loops between the handles} 
   \label{fig:3loops}
     \end{center}
\end{figure}

Any loop siting in one of the original copies of~$\plr$, $L_0$ or~$L_1$, 
satisfies the monotonicity condition because~$L_0$ and~$L_1$ are monotone.

As a representative of a generator of the homology of the handle, one can take 
the circle on one of the extremities of the handle lying on one of the copies 
of the Lagrangian~$\plr$, say for example~$L_0$.
More explicitely, one may take the image of the circle 
$\{(x_0, x_1), \, x_0^2 + x_1^2 = \varepsilon^2 \}$, for~$\varepsilon > \varepsilon_1$ 
small, in the chart around an intersection point of~$L_0$ and~$L_1$.
We take a disc in~$\plc$ with boundary this circle in~$L$ and compute the symplectic 
invariants of that disc. One can for example choose the disc in~$L_0$ which was cut out 
from~$L_0$ to built~$L$. But as the disc is Lagrangian, the two invariants, area and 
Maslov class, vanish on this disc.

One is left with checking the monotonicity condition on the circles between handles. 
One can prove that the monotonicity condition is satisfied on the three circles 
drawing the tennis ball seam on the coordinate-$\drc$s we described at the end of the previous 
section and the discs they bound on these~$\drc$. 
Unfortunately these circles are not in our set of generators for~$H_1(L)$, since their homology class is $2$ times the loop between the corresponding handles depicted
in Figure~\ref{fig:3loops}.
But we can use for the generators loops which partially follow these seams. 
Let us describe a loop~$\gamma$ we can choose between the handles created at~$[0:0:1]$ 
and~$[0:1:0]$ and a disk it bounds. 
Two other loops between handles can be constructed in a similar way.

The loop~$\gamma$ is almost entirely lying in the coordinate~$\drc$ of homogeneous 
equation $z_0 = 0$. See Figure~\ref{fig:Loopgamma}.

\begin{figure}[htbp]
  \begin{center}
   \psfrag{L_0}{$L_0$}
   \psfrag{L_1}{$L_1$}
   \psfrag{L}{$L$}  
   \psfrag{a}{$a$}   
   \psfrag{b}{$b$}   
   \psfrag{c}{$c$}   
   \psfrag{d}{$d$}   
   \psfrag{e}{$e$}    
   \includegraphics[height=6cm]{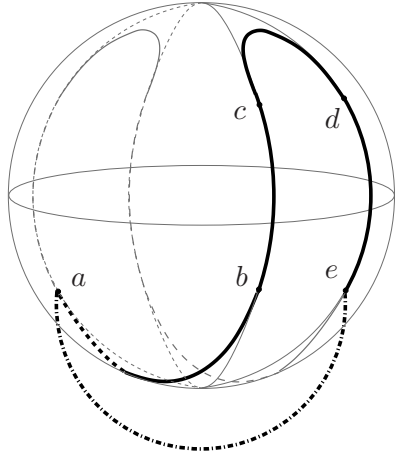} 
   \caption{The loop~$\gamma$ 
   } 
   \label{fig:Loopgamma}
     \end{center}
\end{figure}

It is based at a point where~$L$ coincides with~$L_0$, for example the point~$a$ of local 
coordinates~$(z_0, z_1) = (0, \varepsilon)$ in the local chart at~$[0:0:1]$. 
From this point, follow the handle at~$[0:0:1]$ along the path parametrized by 
$$\{(0, e^{-t} \varepsilon_1 + e^{t} e^{-i \frac{\pi}{3}} (-\varepsilon_1))\}$$
(we include here the smoothing by considering we can locally take the 
parametrization of the handle for~$t$ varying from~$-\infty$ to~$+\infty$) 
till the point~$b$ of local coordinates~$(z_0, z_2) = (0, e^{-i \frac{\pi}{3}} \varepsilon)$.
Then follow~$L_1 \cap \{z_0 = 0\}$ "up" towards~$[0:1:0]$ till the 
point~$c$ of local coordinates~$(z_0, z_2) = (0, e^{i \frac{\pi}{3}} (-\varepsilon))$ in the 
local chart at~$[0:1:0]$. 
Next, $\gamma$ goes back to~$L_0$ through the handle at~$[0:1:0]$, following 
"backwards" the path parametrized by 
$$(0, e^{-t} (-\varepsilon_1) + e^{t} e^{i \frac{\pi}{3}} (-\varepsilon_1))$$
in local coordinates in the chart at~$[0:1:0]$ till it reaches the 
point~$d$ of local coordinates~$(0, - \varepsilon)$ in the chart at~$[0:0:1]$.
The path then follows $L_0 \cap \{z_0 = 0\}$ "down" to~$[0:0:1]$ till the point~$e$
of local coordinates~$(0, - \varepsilon)$ in the chart at~$[0:0:1]$.
Now we close the loop~$\gamma$ with a path contained in~$L_0 \cap L$ but leaving the 
coordinate-$\drc$ $\{z_0 = 0\}$ by following the half circle parametrized in the 
chart at~$[0:0:1]$ by
$$\{(-\varepsilon \sin (t), -\varepsilon \cos (t)) | \, t \in [0;\pi] \}.$$

This loop encloses a disk~$u$ in $\plc$ which can be described as the union of the 
portion of sphere~$\{z_0 = 0\}$ lying between~$L_0$ and~$L_1$ in the sector making 
a $\frac{\pi}{3}$-angle and delimited by~$\gamma$ in the "north", the portion of the same sphere in 
the $\frac{2\pi}{3}$-sector between~$\gamma$ and the segment in~$L_0$ (but not~$L$) 
of coordinates in the chart at~$[0:0:1]$
$$[e;a] = \{(0,z_2) | \, z_2 \in [-\varepsilon, \varepsilon]\},$$
and the half disk in~$L_0$ enclosed by this segment~$[e;a]$ and~$\gamma$ 
(the part of the disk~$u$ in~$\{z_0 = 0\}$ and the half disk are glued along the 
segment~$[e;a]$).

This disk~$u$ and the similar ones we can build between the other handles together 
with the disks considered before generate~$H_2(\plc,L)$, so that the monotonicity of~$L$ 
will follow from the next two lemmas which compute the area and the 
Maslov class of~$u$. 
\end{proof}

\begin{lem}
\label{lem:u-area}
The disk~$u$ has area~$\dfrac{1}{2}$.
\end{lem}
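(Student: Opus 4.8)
=== PROOF PROPOSAL ===

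\textbf{The plan is to} compute the area of the disk $u$ by decomposing it into its three constituent pieces as described in the construction, and to compute the symplectic area of each piece separately, exploiting the fact that one of the three pieces lies entirely in the Lagrangian $L_0$ and hence contributes zero area. The disk $u$ was built as the union of (i) the portion of the sphere $\{z_0 = 0\}$ lying between $L_0$ and $L_1$ in the $\frac{\pi}{3}$-sector bounded by $\gamma$ in the north, (ii) the portion of the same sphere in the $\frac{2\pi}{3}$-sector between $\gamma$ and the segment $[e;a] \subset L_0$, and (iii) the half disk in $L_0$ bounded by $[e;a]$ and $\gamma$. Since pieces (ii) partly and (iii) entirely live in the Lagrangian $L_0$, on which $\omega$ restricts to zero, the symplectic area of $u$ reduces to computing integrals of $\omega$ over the two spherical sectors of $\{z_0 = 0\}$.

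\textbf{First I would} set up explicit coordinates on the projective line $\{z_0 = 0\} \cong \drc$ and record the total symplectic area of this line under our normalization, which is $3$ by the convention $\int_{\drc}\omega = 3$ fixed just before the statement. The two Lagrangian circles $L_0 \cap \{z_0=0\}$ and $L_1 \cap \{z_0 = 0\}$ meet at the two poles $[0:1:0]$ and $[0:0:1]$ and cut the sphere into lune-shaped sectors; the relevant ones subtend angles $\frac{\pi}{3}$ and $\frac{2\pi}{3}$ at the poles, reflecting the angle $\frac{\pi}{3}$ by which $L_1$ is rotated off $L_0$ in each coordinate. The key observation is that for the round/Fubini--Study sphere the symplectic area of a lune between two great circles through antipodal poles is proportional to the dihedral angle, so a lune of angular width $\varphi$ has area $\frac{3}{2\pi}\cdot \varphi \cdot 2 = \frac{3\varphi}{\pi}$ (the factor accounting for the full sphere of area $3$ corresponding to total angle $2\pi$ swept on one side, equivalently total dihedral $2\pi$). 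I would verify this proportionality directly in the standard symplectic coordinates on $\drc$.

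\textbf{The area computation} then combines: the $\frac{\pi}{3}$-sector piece (i) contributes $\frac{3}{2\pi}\cdot\frac{\pi}{3} = \frac{1}{2}$, while the $\frac{2\pi}{3}$-sector piece (ii), being bounded by $\gamma$ rather than by a full circle, must be measured only up to where $\gamma$ descends through the handles; but $\gamma$ is arranged to follow the Lagrangian loci $L_0$ and $L_1$ together with the handle traces, so the net $\omega$-area swept by (ii) relative to the reference position along $L_0$ cancels against the contribution one would otherwise overcount, leaving the total at $\frac{1}{2}$. Concretely I expect the honest bookkeeping to show that the two spherical sectors contribute areas that are each multiples of $\frac{3\varphi}{2\pi}$ and that, after accounting for the orientation (one sector is traversed positively and the other enters only through the part genuinely bounded by $\gamma$ and not by $L_0$), the signed sum equals $\frac{1}{2}$.

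\textbf{The hard part will be} the careful sign and boundary bookkeeping for piece (ii): one must be precise about which arc of $\gamma$ bounds the $\frac{2\pi}{3}$-sector versus which segment of $L_0$ (the segment $[e;a]$) bounds it, and ensure that the half-disk (iii) in $L_0$ and the $L_0$-portions of the boundary of (ii) are correctly identified as Lagrangian (hence area-zero) so that they drop out. A clean way to avoid error is to invoke Stokes' theorem: writing $\omega = d\lambda$ locally (or using that on the affine chart $\omega = \sum dx_j\wedge d\theta_j$ so $\lambda = \sum x_j\, d\theta_j$ is a primitive in action-angle coordinates), the area $\int_u \omega = \oint_{\partial u}\lambda = \oint_\gamma \lambda$ becomes a line integral along $\gamma$ alone; since $\gamma$ runs along $L_0$, along $L_1$, and through the two handles, and since the handle traces and Lagrangian arcs have controlled $\lambda$-integrals (the handle contributions are negligible for large $T$ and small $\varepsilon$, and the Lagrangian-arc contributions are determined by the angle coordinates $\theta$ at the endpoints), the whole computation collapses to evaluating the holonomy-type integral $\int_\gamma \sum x_j\, d\theta_j$, which the angular data $\frac{\pi}{3}$ and $\frac{2\pi}{3}$ render equal to $\frac{1}{2}$.
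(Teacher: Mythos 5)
Your decomposition is the same as the paper's (the $\frac{\pi}{3}$-lune in $\{z_0=0\}$, the small region in the $\frac{2\pi}{3}$-sector, and the half-disk in $L_0$), and your central computation agrees with the paper's: with $\int_{\drc}\omega = 3$, a lune of dihedral angle $\varphi$ between the two great circles through the poles has area $\frac{3\varphi}{2\pi}$, so the $\frac{\pi}{3}$-lune has area $\frac{1}{2}$, and the half-disk contributes $0$ because it lies in the Lagrangian $L_0$. (Your displayed formula $\frac{3}{2\pi}\cdot\varphi\cdot 2 = \frac{3\varphi}{\pi}$ carries a spurious factor of $2$; the numerical evaluation $\frac{3}{2\pi}\cdot\frac{\pi}{3}=\frac{1}{2}$ that you actually use is the correct one.)

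The genuine gap is in the handle corrections, which are the only nontrivial part of the proof. Piece (i) is not the full $\frac{\pi}{3}$-lune: the handle at $[0:1:0]$ cuts a corner of area $a(\varepsilon)$ off it, so its area is $\frac{1}{2}-a(\varepsilon)$; piece (ii) is precisely the region of area $a(\varepsilon)$ swept into the $\frac{2\pi}{3}$-sector by the handle at $[0:0:1]$. Since monotonicity is an exact identity (constant $\frac{1}{2}$ with $\mu(u)=1$), the lemma needs the area to be exactly $\frac{1}{2}$, so it is not enough that these corrections are ``negligible for large $T$ and small $\varepsilon$,'' as you say at one point --- they must cancel exactly. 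They do, but only because the surgery was normalized in Section~\ref{sec:surgeryatapoint} so that the area between the handle and the original Lagrangians along each $\cc$-factor equals the same $a(\varepsilon)$ at every intersection point (this uses $\sin(\pi/3)=\sin(2\pi/3)$ and the stated choice of smoothing). Your proposal never invokes this normalization: the sentence ``the net $\omega$-area swept by (ii) \dots cancels against the contribution one would otherwise overcount'' asserts the cancellation without identifying the two equal quantities $\mp a(\varepsilon)$ or the reason they coincide. The Stokes-theorem variant you sketch does not bypass this (the two handle arcs contribute boundary terms that must be shown to cancel exactly, and the primitive $\sum x_j\,d\theta_j$ degenerates on $\{z_0=0\}$, where most of $u$ lives), so as written the argument establishes $\frac{1}{2}+O(\varepsilon)$ rather than $\frac{1}{2}$.
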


\begin{proof}
We will compute the area of~$u$ by adding the area of the  
three portions we described above. 

As we noticed in Section~\ref{sec:surgeryatapoint}, we can make the Lagrangian surgery 
such that the areas between the restrictions to each~$\cc$-factor of~$\cc^2$ of the 
original Lagrangians and the handles 
 are small and equal. We do the surgeries as in Section~\ref{sec:surgeryatapoint} 
so that these areas are equal 
to~$a(\varepsilon)$ small at each of the intersection points. 

The area of the first portion is then the area of the~$\frac{\pi}{3}$-sector, namely one sixth 
of the total area of the sphere, minus the area lost at the handle, namely~$a(\varepsilon)$.
The area in the other sector of the coordinate projective line is the area gained through 
the handle, that is~$a(\varepsilon)$. The contribution of the portions of the disk 
in the coordinate sphere is thus~$\frac{1}{2}$.
The contribution of the half-disk in~$L_0$ is zero as this half-disk lies totally in a 
Lagrangian submanifold.
\end{proof}

\begin{lem}
\label{lem:u-Maslov}
The disk~$u$ has Maslov class~$1$.
\end{lem}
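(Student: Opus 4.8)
The plan is to compute the Maslov class $\mu_L(u)$ by tracking how the tangent plane of $L$ along the boundary loop $\gamma$ rotates relative to a fixed Lagrangian reference frame, expressing $u$ as the concatenation of its three constituent pieces. The Maslov index of a disk with Lagrangian boundary condition is the degree of the loop of tangent Lagrangian planes in the Lagrangian Grassmannian, measured via the square-of-determinant map $\det^2 : U(2)/O(2) \to \cerc$. Since $\gamma$ is built from arcs lying alternately in $L_0$, in the two handles, and in $L_1$, and since most of $u$ lies inside the coordinate sphere $\{z_0 = 0\}$, I would first choose a convenient trivialization of the Lagrangian Grassmannian bundle over a neighbourhood of $u$ adapted to the standard toric/complex structure, so that the contributions of the various pieces can be read off individually.

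First I would decompose $\partial u = \gamma$ into its five labelled arcs $a \to b$, $b \to c$, $c \to d$, $d \to e$, $e \to a$ described in the construction, and for each arc record the path of tangent Lagrangian 2-planes of $L$. On the arc $e \to a$ (the half-circle) and wherever $\gamma$ runs inside $L_0$ or $L_1$, the tangent plane stays in a single fixed real Lagrangian subspace, so those arcs contribute no rotation to the Gauss-type map beyond their endpoint matching. The essential rotation happens in the two handle arcs $a \to b$ and $c \to d$: along each handle the tangent plane interpolates between $l_0 = \rr^2$ and the rotated plane $l_1$, and the handle was explicitly parametrized (for the $z_0=0$ slice) by $e^{-t}x + e^t e^{\pm i\pi/3}(-x)$, so I can compute the corresponding rotation in $\det^2$ directly from this formula. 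The key point is that passing through each handle contributes a definite fraction of a full turn, and the two handles (at $[0:0:1]$ with angle $-\pi/3$ and at $[0:1:0]$ with angle $\pi/3$, traversed in opposite senses) must be added with the correct orientations dictated by the direction of travel along $\gamma$.

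To organize this cleanly I would separate the tangent plane $T L$ along $\gamma$ into its component tangent to the sphere $\{z_0 = 0\}$ (the $z_2$-direction) and its transverse component (the $z_0$-direction normal to the sphere). The in-sphere component gives a loop of \emph{lines} in the $z_2$-factor $\cc$ whose winding I can compute from the tennis-ball-seam picture and the handle formulas, while the transverse $z_0$-component likewise gives a loop of lines in the $z_0$-factor $\cc$; the total Maslov index is the sum of the two corresponding degrees. Because the handles were chosen precisely so that their angular contributions are $2\pi/3$-sectors and $\pi/3$-sectors respectively, and because the closing arc $e \to a$ contributes the remaining in-sphere rotation, I expect the two factor-degrees to combine to give $\mu_L(u) = 1$, matching the monotonicity constant $\tfrac12$ against the area $\tfrac12$ computed in Lemma~\ref{lem:u-area}.

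The main obstacle I anticipate is bookkeeping the relative orientations and the endpoint identifications correctly: the loop $\gamma$ switches between $L_0$, $L_1$, and the handles, and each transition point must be handled so that the loop of Lagrangian planes is genuinely continuous and closed. In particular I must verify that the choice of the modified handle at $[0:0:1]$ (the $h$-variant forced by $\sin(\pi/3) = -\sin(-\pi/3)$) and the $h'$-handle at $[0:1:0]$ produce a \emph{single} seam circle rather than two, and that the half-circle arc $e \to a$ indeed closes the in-sphere winding without introducing an extra half-turn. Since the construction is invariant under the linear normalizations described in Section~\ref{sec:surgeryatapoint}, I would reduce each handle contribution to the model case $\alpha = \pi/2$ and read off the winding from the hyperbola picture, then reassemble; getting the signs consistent across the two local charts (which use different affine coordinates on $\plc$) is where the care is needed.
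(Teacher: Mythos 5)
Your proposal follows essentially the same route as the paper: trivialize $T\plc$ over the affine chart at $[0:0:1]$ containing all of $u$, write the loop of tangent Lagrangian planes along the five arcs of $\gamma$ as a path of diagonal unitary matrices acting on the reference plane $\rr^2$, discard the arcs lying in the linear Lagrangians $l_0$ and $l_1$ (including the closing half-circle $e \to a$, which in fact contributes nothing rather than a ``remaining rotation''), and sum the coordinatewise windings of the two handle arcs via the squared determinant. The one step you defer --- pinning down which homotopy class of path in $U(1)$ each diagonal entry traces through the $[0:1:0]$-handle once it is rewritten in the $[0:0:1]$ chart --- is exactly the point the paper settles by evaluating the explicit handle parametrization at its midpoint $t=0$, so your plan is sound but that verification is where the real content of the computation lies.
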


\begin{proof}
The first crucial remark is that the disk~$u$ is lying entirely in the chart 
at~$[0:0:1]$, so that the tangent bundle of~$\plc$ is already trivialized along the disk 
when we work in this chart. 
Now, to compute the Maslov class of this disk, we will 
write the loop in the Lagrangian Grassmannian we have along the boundary~$\gamma$ as the 
action of a loop~$A(t)$ of matrices in~$U(2)$ on the reference linear Lagrangian
space~$\rr^2$ of~$\cc^2$. 
The Maslov class~$\mu(u)$ is then the degree of the square of 
the determinant of~$A$ seen as a map from~$\cerc$ to~$\cerc$.

To describe this action, we will decompose the action along the different portions 
of the loop we considered above. The loop~$\gamma$ is the concatenation of the 
paths~$\gamma_1$ from~$a$ to $b$, $\gamma_2$ from~$b$ to~$c$, ... and, $\gamma_5$ 
from~$e$ to~$a$. On each of these paths, we will decompose the action of~$U(2)$ 
so that the Maslov class of~$u$ can be written as the product of these different paths 
of matrices.

At the point~$a$, we are in $L_0 \cap L$, with~$L_0$ 
a linear Lagrangian in the chart, so that the submanifold identifies with its tangent 
space, namely~$\rr^2$, our reference linear Lagrangian subspace.

Between the points~$b$ and~$c$, we stay on~$L_1$, the tangent space is identically 
equal to the linear Lagrangian 
subspace~$l_1= \{(e^{i \frac{\pi}{3}} x_0, e^{- i \frac{\pi}{3}} x_1) | \, x_0, x_1 \in \rr\}$ 
so that~$A(t)$ is the identity along~$\gamma_2$ and this portion has no contribution 
to the degree.

Similarly, along~$\gamma_4$ and~$\gamma_5$, we stay on the same linear 
Lagrangian (either~$l_1$ or~$l_0$) so that the matrix~$A(t)$ is again the identity 
along these portions of~$\gamma$.

We are left to compute the contributions of the handles to the Maslov class.

Computing the contribution of the handle at~$[0:0:1]$ along~$\gamma_1$ 
is straightforward because we 
have the parametrization of the handle explicitely written in this chart, namely
$$\left \{ { \left. e^{-t} \left( { \begin{array}{c} x_0 \\x_1 \end{array} } \right) 
+ e^{t} \left( { \begin{array}{c} e^{i \frac{\pi}{3}} x_0 \\ e^{-i \frac{\pi}{3}} (-x_1) \end{array} } \right) 
\right|   \begin{array}{c} t \in \rr \\x_0^2+x_1^2 = \varepsilon_1^2 \end{array} } \right \}.$$
Then the tangent spaces to that handle along the points in~$\{z_0 = 0\}$ can be parametrized 
by
$$\left \{ { \left. e^{-t} \left( { \begin{array}{c} X_0 \\- \varepsilon_1 T \end{array} } \right) 
+ e^{t} \left( { \begin{array}{c} e^{i \frac{\pi}{3}} X_0 \\ e^{-i \frac{\pi}{3}} (- \varepsilon_1) T \end{array} } \right) 
\right|  }  t, T, X_0 \in \rr \right \}.$$
For~$t$ going to~$- \infty$, the tangent space is asymptotic to~$\rr^2$ for which we can 
take the canonical basis~$\{ (1,0) , (0,1) \}$. Through the handle, the 
vectors~$(X_0,-\varepsilon_1 T) = (1,0)$ and $(X_0,-\varepsilon_1 T) = (0,1)$ 
are mapped to $(X_0,-\varepsilon_1 T) = (e^{i \frac{\pi}{3}},0)$ and 
$(X_0,-\varepsilon_1 T) = (0,e^{-i \frac{\pi}{3}})$, a basis of~$l_1$ through a path of 
matrices homotopic to
$$A_1(s) = \left( { \begin{array}{cc} e^{-i s \frac{\pi}{3}} & \\0 & e^{i s \frac{\pi}{3}} \end{array} } \right)$$
for~$s$ going from~$s = 0$ to $s = 1$. The determinant of~$A_1$ being identically equal to~$1$, 
this part of~$\gamma$ will not contribute to the degree.

The contribution of the handle at~$[0:1:0]$ can also be computed thanks to the 
explicit parametrization of the handle, but we have first to write it in the chart at~$[0:0:1]$ 
for our computation.
In that chart, the handle is now parametrized by
$$\left \{ { \left. 
\left({ \dfrac{ e^{-t} x_0 + e^{t} e^{i \frac{2 \pi}{3}} x_0 }{ e^{-t} x_2 + e^{t} e^{i \frac{\pi}{3}} x_2 }, 
 \dfrac{ 1 }{ e^{-t} x_2 + e^{t} e^{i \frac{\pi}{3}} x_2 }} \right) 
 \right|   \begin{array}{c} t \in \rr \\x_0^2+x_2^2 = \varepsilon_1^2 \end{array} } \right \},$$
so that the tangent spaces are described by 
$$\left \{ { \left. {
\left({ \dfrac{e^{-t} X_0 + e^{t} e^{i \frac{2\pi}{3}} X_0}
{e^{-t} x_2 + e^{t} e^{i \frac{\pi}{3}} x_2}, 
- \dfrac{-e^{-t} x_2 T + e^{t} e^{i \frac{\pi}{3}} x_2 T }{(e^{-t} x_2 + e^{t} e^{i \frac{\pi}{3}} x_2)^2}} \right) 
} \right| \begin{array}{c} 
t, T, X_0 \in \rr \\ x_2 = -\varepsilon_1 \end{array} 
} \right \}.$$
When~$t$ tends to~$-\infty$, the handle is indeed asymptotic to~$l_0$ and 
when~$t$ tends to~$+\infty$, the handle is asymptotic 
to~$l_1 = \{(e^{i \frac{\pi}{3}} x_0, e^{- i \frac{\pi}{3}} x_1) | \, x_0, x_1 \in \rr\}$. 
The canonical basis~$\{ (1,0) , (0,1) \}$ is mapped through the handle 
to$\{ (e^{i \frac{\pi}{3}},0) , (0,- e^{-i \frac{\pi}{3}})\}$.
The action on the first coordinate can be homotopic to
the path in~$U(1)$ $s \in [0,1] \mapsto e^{i s \frac{\pi}{3}}$ or 
to~$s \in [0,1] \mapsto e^{-i s \frac{5\pi}{3}}$.
But for~$t = 0$ at the middle of the handle, one can check via the formula 
that the image of the vector~$(1,0)$ is positively proportional to 
the vector~$(e^{i \frac{\pi}{6}},0)$ so that the path is 
$$s \in [0,1] \mapsto e^{i s \frac{\pi}{3}}.$$
For the second coordinate, in a similar manner one can act either by a 
path homotopic  to~$s \in [0,1] \mapsto e^{i s \frac{ 2 \pi}{3}}$ or 
to~$s \in [0,1] \mapsto e^{- i s \frac{4\pi}{3}}$.
For~$t = 0$ at the middle of the handle, one can check via the formula that 
the image of the vector~$(0,1)$ is positively proportional to 
the vector~$(0,-e^{i \frac{\pi}{3}})$ so that the path is 
$$s \in [0,1] \mapsto e^{- i s \frac{4\pi}{3}}.$$
The contribution of the handle along~$\{z_0 = 0\}$ from~$l_0$ to~$l_1$ is 
thus homotopic to
$$s \in [0,1] \mapsto 
\left( { \begin{array}{cc} e^{i s \frac{\pi}{3}} & \\0 & e^{-i s \frac{4\pi}{3}} \end{array} } \right).$$
But along~$\gamma_3$ we move from~$L_1$ to~$L_0$ so that the matrix of the action of~$U(2)$ 
along this portion of the boundary is homotopic to
$$s \in [0,1] \mapsto 
\left( { \begin{array}{cc} e^{-i s \frac{\pi}{3}} & \\0 & e^{+4i s \frac{\pi}{3}} \end{array} } \right),$$
whose determinant squared is equal to 
$$\begin{array}{ccc}
 [0,1]& \longrightarrow & \cerc  \\
    s & \longmapsto     & e^{2i s \pi}
\end{array}$$
which is of degree~$1$.
In conclusion, the Maslov class of~$u$ being the sum of all the contributions of the portions 
of the loop~$\gamma$ is equal to~$1$.
\end{proof}

\subsection{A monotone~$K \# \Sigma_4$ in~$\drc \times \drc$}

\label{sec:KSigma4}

In this section, we explain the construction of a monotone Lagrangian embedding 
of the connected sum of a Klein bottle and a surface of genus~$4$ in the 
product~$\drc \times \drc$.\\

Let us normalize the symplectic form on~$\drc \times \drc$ such that the area of 
a projective line is~$2$:
$$\int_{\drc} \omega = 2.$$
With this normalisation, $\drc \times \drc$ is monotone with monotonicity constant~$1$:
$$\forall v \in H_2(\drc \times \drc), \int_v \omega = c_1(T(\drc \times \drc))(v),$$
and any monotone Lagrangian submanifold~$L$ will have a monotonicity constant 
equal to~$\frac{1}{2}$:
$$\forall u \in H_2(\drc \times \drc, L), \int_u \omega = \frac{1}{2} \, \mu_L(u).$$
To construct a monotone~$K \# \Sigma_4$ in~$\drc \times \drc$, we take two Hamiltonian 
isotopic copies of the real part of~$\drc \times \drc$, i.e. two Lagrangian tori, 
that intersect in four points and perform a suitable Lagrangian surgery at 
these four points.

The image of the resulting Lagrangian~$L$ under the moment map will be contained in the original 
moment polytope of the ambient symplectic manifold choped at its four corners (see 
Figure~\ref{fig:PolytopeKSigma4}).

\begin{figure}[htbp]
  \begin{center}
   \psfrag{2}{$2$}
   \psfrag{eps1}{$\varepsilon_1^2$}  
   \psfrag{2-eps1}{$2-\varepsilon_1^2$} 
   \includegraphics[height=6cm]{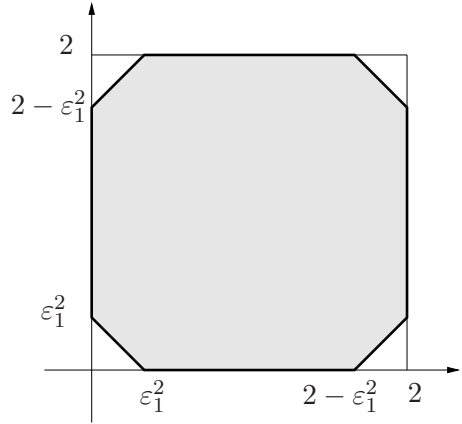}
   \caption{The image of~$L$ under the moment map is contained in and smoothly approximates the shaded polytope.} 
   \label{fig:PolytopeKSigma4}
     \end{center}
\end{figure}

In view of the monotonicity condition, we have here two possible choices for the copies 
of the real part and corresponding surgeries.

One choice is to take one copy of the real part to be
$$L_0 = \{ \left. ([x_0:x_1], [u_0:u_1]) \in \drc \times \drc \right|  
x_0, x_1, u_0, u_1 \in \rr \}$$ 
and the second one its "rotation" by~$i$:
$$L_1 = \{ \left. ([e^{i \frac{\pi}{2}} x_0 : x_1], [e^{i \frac{\pi}{2}} u_0:u_1]) \in \drc \times \drc \right|  x_0, x_1, u_0, u_1 \in \rr \}.$$
They intersect each other in the four points 
$$([0:1], [0:1]), ([1:0], [0:1]), ([0:1], [1:0]), ([1:0], [1:0]),$$
the preimage of the four corners under the standard moment map.
Then choose the following handles at the intersection points:
\begin{itemize}
 \item at~$([0:1], [0:1])$, $l_0 = \{(x_0,u_0) | \, x_0, u_0 \in \rr\}$, $l_1 = \{(i x_0, i u_0) | \, x_0, u_0 \in \rr\}$, and we insert the handle:
$$\left \{ { \left. e^{-t} \left( { \begin{array}{c} x_0 \\u_0 \end{array} } \right) + e^{t} \left( { \begin{array}{c} i x_0 \\ i u_0 \end{array} } \right) \right|  
 \begin{array}{c} t \in \rr \\x_0^2+u_0^2 = \varepsilon_1 \end{array} } \right \},$$
 \item at~$([1:0], [0:1])$, $l_0 = \{(x_1,u_0) | \, x_1, u_0 \in \rr\}$, $l_1 = \{(- i x_1, i u_0) | \, x_1, u_0 \in \rr\}$, and we insert the handle:
$$\left \{ { \left. e^{-t} \left( { \begin{array}{c} x_1 \\u_0 \end{array} } \right) + e^{t} \left( { \begin{array}{c} - i (- x_1) \\ i u_0 \end{array} } \right) \right|   
\begin{array}{c} t \in \rr \\x_1^2+u_0^2 = \varepsilon_1 \end{array} } \right \},$$
 \item at~$([0:1], [1:0])$, $l_0 = \{(x_0,u_1) | \, x_0, u_1 \in \rr\}$, $l_1 = \{(i x_0, - i u_1) | \, x_0, u_1 \in \rr\}$, and we insert the handle:
$$\left \{ { \left. e^{-t} \left( { \begin{array}{c} x_0 \\u_1 \end{array} } \right) + e^{t} \left( { \begin{array}{c} i x_0 \\ - i (- u_1) \end{array} } \right) \right|   
\begin{array}{c} t \in \rr \\x_0^2+u_1^2 = \varepsilon_1 \end{array} } \right \},$$
 \item at~$([1:0], [1:0])$, $l_0 = \{(x_1,u_1) | \, x_1, u_1 \in \rr\}$, $l_1 = \{(- i x_1, - i u_1) | \, x_1, u_1 \in \rr\}$, and we insert the handle:
$$\left \{ { \left. e^{-t} \left( { \begin{array}{c} x_1 \\u_1 \end{array} } \right) + e^{t} \left( { \begin{array}{c} - i (- x_1) \\ - i (- u_1) \end{array} } \right) \right|  
 \begin{array}{c} t \in \rr \\x_1^2+u_1^2 = \varepsilon_1 \end{array} } \right \},$$
\end{itemize}
so that the intersection of the Lagrangian obtained by surgery with any of the $\drc$ 
which are preimages of the edges on the boundary of the moment polytope 
is one circle of the shape of the tennis ball seam as before. 

As in the previous section, one cannot take this circle 
to check the monotonicity of the surface as it goes around the handle twice, 
but we can construct similar loops going only once around the handle and 
check that they satisfy the monotonicity condition.\\

Alternatively, one can choose the surgeries such that the intersection of the 
Lagrangian after surgery with each coordinate-$\drc$ is a union of two circles. 
To ensure 
monotonicity in this case, we need each circle to enclose an area slightly 
bigger than the one we get with the choice of~$L_1$ above.

We will take the same Lagrangian~$L_0$ and for~$L_1$ the following Hamiltonian isotopic copy:
$$L_1 = \left \{ \left. \left([e^{i \left(\frac{\pi}{2}+\delta \right)} x_0:x_1], [e^{i \left(\frac{\pi}{2}+\delta \right)} u_0:u_1] \right) \in \drc \times \drc \right|  x_0, x_1, u_0, u_1 \in \rr \right \},$$
for a small positive parameter~$\delta$ that will be fixed later.
They intersect each another again in the four corners of the moment polytope 
of~$\drc \times \drc$.

In each chart around the intersection points, we are making the following choices:
\begin{itemize}
 \item at~$([0:1], [0:1])$, $L_0$ and $L_1$ are the linear subspaces $l_0 = \{(x_0,u_0) | \, x_0, u_0 \in \rr\}$, 
 $l_1 = \{(e^{i \left(\frac{\pi}{2}+\delta \right)} x_0, e^{i \left(\frac{\pi}{2}+\delta \right)} u_0) | \, x_0, u_0 \in \rr\}$, 
 and we insert the handle:
$$\left \{ { \left. e^{-t} \left( { \begin{array}{c} x_0 \\u_0 \end{array} } \right) 
+ e^{t} \left( { \begin{array}{c} e^{i \left(\frac{\pi}{2}+\delta \right)} x_0 \\ e^{i \left(\frac{\pi}{2}+\delta \right)} u_0 \end{array} } \right) \right|  
 \begin{array}{c} t \in \rr \\x_0^2+u_0^2 = \varepsilon_1 \end{array} } \right \};$$
 \item at~$([1:0], [0:1])$, $L_0$ and $L_1$ are the linear subspaces  $l_0 = \{(x_1,u_0) | \, x_1, u_0 \in \rr\}$, 
 $l_1 = \{(e^{-i \left(\frac{\pi}{2}+\delta \right)} x_1, e^{i \left(\frac{\pi}{2}+\delta \right)} u_0) | \, x_1, u_0 \in \rr\}$, and we insert the handle:
$$\left \{ { \left. e^{-t} \left( { \begin{array}{c} x_1 \\u_0 \end{array} } \right) + e^{t} \left( { \begin{array}{c} e^{-i \left(\frac{\pi}{2}+\delta \right)} x_1 \\ e^{i \left(\frac{\pi}{2}+\delta \right)} (-u_0) \end{array} } \right) \right|   
\begin{array}{c} t \in \rr \\x_1^2+u_0^2 = \varepsilon_1 \end{array} } \right \};$$
 \item at~$([0:1], [1:0])$, $L_0$ and $L_1$ are the linear subspaces  $l_0 = \{(x_0,u_1) | \, x_0, u_1 \in \rr\}$, 
 $l_1 = \{(e^{i \left(\frac{\pi}{2}+\delta \right)} x_0, e^{-i \left(\frac{\pi}{2}+\delta \right)} u_1) | \, x_0, u_1 \in \rr\}$, 
 and we insert the handle:
$$\left \{ { \left. e^{-t} \left( { \begin{array}{c} x_0 \\u_1 \end{array} } \right) 
+ e^{t} \left( { \begin{array}{c} e^{i \left(\frac{\pi}{2}+\delta \right)} (-x_0) \\ e^{-i \left(\frac{\pi}{2}+\delta \right)} u_1 \end{array} } \right) \right|   
\begin{array}{c} t \in \rr \\x_0^2+u_1^2 = \varepsilon_1 \end{array} } \right \};$$
 \item at~$([1:0], [1:0])$, $L_0$ and $L_1$ are the linear subspaces  $l_0 = \{(x_1,u_1) | \, x_1, u_1 \in \rr\}$, 
 $l_1 = \{(e^{-i \left(\frac{\pi}{2}+\delta \right)} x_1, e^{-i \left(\frac{\pi}{2}+\delta \right)} u_1) | \, x_1, u_1 \in \rr\}$, 
 and we insert the handle:
$$\left \{ { \left. e^{-t} \left( { \begin{array}{c} x_1 \\u_1 \end{array} } \right) 
+ e^{t} \left( { \begin{array}{c} e^{-i \left(\frac{\pi}{2}+\delta \right)} (- x_1) \\ 
e^{-i \left(\frac{\pi}{2}+\delta \right)} (- u_1) \end{array} } \right) \right|  
 \begin{array}{c} t \in \rr \\x_1^2+u_1^2 = \varepsilon_1 \end{array} } \right \}.$$
\end{itemize}

\begin{thm}
The construction produces a monotone Lagrangian embedding of a compact 
surface $K \# \Sigma_4$ in~$\drc \times \drc$ for an appropriate choice of~$\delta$.
\end{thm}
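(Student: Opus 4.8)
The plan is to reproduce, in $\drc\times\drc$, the argument of Section~\ref{sec:KSigma-construction}, with the two copies of $\plr$ replaced by the two tori $L_0,L_1$ and the handles chosen as in the second (i.e. $\delta$-dependent) construction above. First I would settle the topology. Performing four point-surgeries on the two tori, an Euler-characteristic count gives $\chi(L)=-8$: the first surgery is a connected sum ($\chi=0+0-2$) and each of the three remaining surgeries attaches a $1$-handle ($\Delta\chi=-2$). The sign choices in the four handles are made precisely so that $L$ is non-orientable; since a closed non-orientable surface is determined by its Euler characteristic, this forces $L\cong K\#\Sigma_4$. As $L$ is non-orientable we have $H_2(L)=0$, and as $H_1(\drc\times\drc)=0$ the long exact sequence of the pair splits as $H_2(\drc\times\drc,L)\cong H_2(\drc\times\drc)\oplus H_1(L)$; monotonicity on the $H_2(\drc\times\drc)=\zz^2$ summand is inherited from that of the ambient manifold, so only the generators of $H_1(L)$ remain.

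As in the $\plc$ case I would split these generators into three families. Any loop lying in $L_0$ or $L_1$ is monotone because $L_0$ (hence its Hamiltonian image $L_1$) is the monotone Clifford torus $\drr\times\drr$ of $\drc\times\drc$, whose generating discs are hemispheres of area $1$ and Maslov index $2$, realising the constant $\tfrac12$. A loop generating the homology of one of the four handles bounds a disc cut out of $L_0$, so its area and Maslov index both vanish. The only remaining generators are the ``between-handles'' loops running along the four coordinate-$\drc$s; in this construction each such edge meets $L$ in two circles, and by the symmetry of the configuration under the symmetries of the square moment polytope it suffices to treat one of them, a circle $c$ built from an arc of $L_0$, an arc of $L_1$, and the two handles at the two adjacent corners.

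The hard part will be the Maslov computation for $c$, the analogue of Lemma~\ref{lem:u-Maslov}. I would take for $u$ the lune of dihedral angle $\theta_0=\tfrac\pi2+\delta$ that $c$ cuts out in the coordinate-$\drc$, and compute $\mu_L(u)$ as the winding of $\det{}^2$ along the loop of tangent Lagrangians $T_pL=\ell_1\oplus\ell_2$. The essential new difficulty, absent from the $\plc$ computation, is that $u$ sweeps out a whole $\drc$ instead of sitting inside one affine chart, so the transition between the two polar charts—carrying the class $c_1(T\drc)=2$—cannot be suppressed, and the naive count in which only the handle rotations contribute gives zero. Working instead with the sign-independent quantity $(z_1z_2)^2$, where $\ell_j=\rr\cdot z_j$, one checks that the two great-circle arcs contribute nothing, while the handle at one pole turns $\arg(z_1z_2)$ by $+2\theta_0$ and the handle at the other by $\pi-2\theta_0$, for a total turning of $\pi$; hence $\mu_L(u)=1$. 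Carrying out this bookkeeping in a trivialisation that is genuinely global over the lune is the step I expect to be most delicate, and I would cross-check the answer against the complementary disc, using $\mu(u)-\mu(u')=2\langle c_1,[\drc]\rangle=4$.

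Finally I would compute the area of $u$ as in Lemma~\ref{lem:u-area}. The lune contributes $\theta_0/\pi=\tfrac12+\delta/\pi$, the part of $\partial u$ contained in $L_0$ contributes nothing, and the two surgeries contribute the small corrections $a(\varepsilon)$ of Section~\ref{sec:surgeryatapoint}; in contrast with the single-seam situation of $\plc$, where they cancel, these two corrections now reinforce, giving $\int_u\omega=\tfrac12+\delta/\pi-2a(\varepsilon)$. Choosing $\delta>0$ by $\delta/\pi=2a(\varepsilon)$ then yields $\int_u\omega=\tfrac12=\tfrac12\,\mu_L(u)$, and by the symmetry noted above this single value of $\delta$ makes every between-handles generator monotone simultaneously. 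Together with the two easy families, this establishes that $L$ is a monotone Lagrangian embedding of $K\#\Sigma_4$ in $\drc\times\drc$.
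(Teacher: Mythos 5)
Your proposal follows essentially the same route as the paper: the same identification of $L$ with $K\#\Sigma_4$ via the Euler characteristic and non-orientability, the same reduction to loops on $L_0,L_1$, handle loops, and the between-handle circles, the same area count $\tfrac12+\delta/\pi-2a(\varepsilon)$ with the choice $\delta=2\pi a(\varepsilon)$, and a Maslov computation whose handle contributions $2\theta_0$ and $\pi-2\theta_0$ are exactly the paper's $\pi+2\delta$ and $-2\delta$, summing to a turning of $\pi$ and hence $\mu_L(u)=1$. The only quibble is that the ``essential new difficulty'' you anticipate is not actually there: since each handle cuts the corner at its pole, the lune $u$ misses a neighbourhood of a pole and sits inside a single affine chart, which is precisely the trivialisation the paper uses, and the ``naive'' count (handles only, the two linear arcs contributing nothing) already gives $1$ rather than $0$.
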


\begin{proof}
Note that even though the two Lagrangian submanifolds~$L_0$ and~$L_1$ are oriented 
(they are tori), one can check that given an orientation of the two tori, two of these handles 
do not preserve the orientation (this cannot be avoided, it is related to the fact 
that the signs of the intersection points cancel in pairs for any choice of orientation). 
Therefore, we get through these four surgeries a non-orientable Lagrangian which is 
the connected sum of the two initial tori with one torus and two Klein bottles. 
It is diffeomorphic to
$$L \cong K \#  \Sigma_4.$$

Following the remarks from Section~\ref{sec:surgeryatapoint}, we can do the surgery 
in each corner of the moment map such that the areas between the handle and the initial 
Lagrangians are small and equal along each~$\drc$ preimage of the boundary of the 
moment polytope. 
With the choice of handles we made above, the intersection of~$L$ with each coordinate-$\drc$ 
consists of two circles lying in the sectors of the coordinate sphere making an 
angle~$\frac{\pi}{2}+\delta$.

We will pick one of these circles and the disk~$u$ it encloses in one of the 
$\frac{\pi}{2}+\delta$-sectors. The area of this disk is equal to the difference 
of the area of one sector and~$2 a(\varepsilon)$:
$$\frac{1}{2}+\frac{\delta}{\pi} - 2 a(\varepsilon).$$
We can now fix~$\delta$ (i.e. take $\delta = 2 \pi a(\varepsilon)$) such that the 
area of~$u$ is equal to~$\frac{1}{2}$.\\

We now compute that the Maslov class of~$u$ is equal to~$1$. 
Let us take~$u$ in the projective line of homogeneous equation~$z_0 = 0$ 
in the homogeneous coordinates $([z_0:z_1],[w_0:w_1])$ of~$\drc \times \drc$. 
In a similar way as in Section~\ref{sec:KSigma-monotonicity}, we trivialize the 
tangent bundle of~$\drc \times \drc$ over~$u$ by considering~$u$ in the 
chart of~$([0:1],[0:1])$.
As before, one can decompose the loop along its boundary in four paths: two paths 
lying on the restriction of the initial Lagrangians to this chart and two paths 
inside the handles.
For the first type of paths, as the restrictions of~$L_0$ and~$L_1$ are linear 
in the chart, they will not contribute to the Maslov class. 
The handle at~$([0:1],[0:1])$ is contributing with a path from~$L_0$ to~$L_1$ 
homotopic to
$$s \in [0,1] \mapsto \left( { 
\begin{array}{cc} e^{i s \left(\frac{\pi}{2}+\delta \right)} & 
\\0 & e^{-i s \left(\frac{\pi}{2}-\delta \right)} \end{array} } \right).$$
The handle at~$([1:0],[0:1])$ is contributing with a path from~$L_0$ to~$L_1$ 
homotopic to
$$s \in [0,1] \mapsto \left( { 
\begin{array}{cc} e^{-i s \left(\frac{\pi}{2}-\delta \right)} & 
\\0 & e^{i s \left(\frac{3\pi}{2}+\delta \right)} \end{array} } \right),$$
so that the Maslov class of~$u$ is the degree of 
$$s \in [0,1] \mapsto e^{-i s 4 \delta} e^{i s (2 \pi + 4 \delta)}$$
that is equal to~$1$.

This is enough for checking the monotonicity of this 
Lagrangian~$K \#  \Sigma_4$ 
in~$\drc \times \drc$ as the relative homology group~$H_2(\drc \times \drc,K \#  \Sigma_4)$ is generated 
by disks with boundary either on~$L_0$ or~$L_1$ (which satisfy the monotonicity condition 
as~$L_0$ and~$L_1$ are monotone) and the disks considered above in the~$\drc$'s.
\end{proof}

\section{Construction of monotone Lagrangian submanifolds using the local model 
along an isotropic circle}
\label{sec:Surgeryinbundle}

\subsection{Case of two copies of a torus intersecting along two circles in~$\drc \times \drc$}

Let~$P = \drc \times \drc$ with the product symplectic form normalized 
as before.

Let us consider the two following Hamiltonian isotopic copies of the real part of~$P$:
$$L_0 = \{ \left. ([x_0:x_1], [u_0:u_1]) \in \drc \times \drc \right|  
x_0, x_1, u_0, u_1 \in \rr \}$$ 
and 
$$L_1 = \{ \left. ([e^{i \frac{\pi}{2}} x_0:x_1], [ u_0:u_1]) \in \drc \times \drc \right|  x_0, x_1, u_0, u_1 \in \rr \}.$$

The two Lagrangians~$L_0$ and~$L_1$ intersect exactly along two isotropic circles:
$$L_0 \cap L_1 = \{[0:1]\} \times \drr \cup \{[1:0]\} \times \drr.$$

Let us study the neighbourhood of~$N = \{[0:1]\} \times \drr$ in~$P$.
The circle~$N$ can be covered by the following two charts of~$\drc \times \drc$:
$$
\begin{array}{cccc}
\phi_0: & U_0 = \{[z_0 : z_1] \mid z_1 \neq 0\} \times \{[w_0 : w_1] \mid w_0 \neq 0\} & \longrightarrow & \cc \oplus \cc \\
        &                        ([z_0:z_1], [w_0 : w_1])                              & \longmapsto     & \left( \frac{z_0}{z_1}, \frac{w_1}{w_0} \right)
\end{array}
$$
and
$$
\begin{array}{cccc}
\phi_1: & U_1 = \{[z_0 : z_1] \mid z_1 \neq 0\} \times \{[w_0 : w_1] \mid w_1 \neq 0\} & \longrightarrow & \cc \oplus \cc \\
        &                        ([z_0:z_1], [w_0 : w_1])                              & \longmapsto     & \left( \frac{z_0}{z_1}, \frac{w_0}{w_1} \right)
\end{array}
$$
so that~$T (\drc \times \drc)_{\mid N}$ can be trivialised along~$N_0 = N \cap U_0$ and~$N_1 = N \cap U_1$
as
$$T (\drc \times \drc)_{\mid N_j} \cong N_j \times (\cc \oplus \cc),$$
where the first summand is just~$\cc = T_{[0:1]} \drc$.
In these trivialisations, 
$$T N_{\mid N_j} \cong N_j \times (\{0\} \oplus \rr)$$
and we have 
$$T N^{\bot}_{\mid N_j} \cong N_j \times (\cc \oplus \rr)$$
so that
$$SN(N, \drc \times \drc)_{\mid N_j} = T N^{\bot} / TN_{\mid N_j} \cong N_j \times  \cc,$$
and the change of trivialisation from~$N_0 \times \cc$ to~$N_1 \times \cc$ in a fibre 
of a point~$([0:1],[c:d])$ of~$N_0 \cap N_1$ is the identity as the fibre at each point 
is~$\cc = T_{[0:1]} \drc$.
We are in the case where the symplectic normal bundle is the trivial complex line bundle over~$N$.

Now, in the trivialisations, the restriction of the initial Lagrangians
to~$SN(N,\drc \times \drc)_{\mid N_j}$ are:
\begin{itemize}
\item for~$L_0$: $N_j \times  \rr$,
\item for~$L_1$: $N_j  \times e^{i \frac{\pi}{2}}  \rr$,
\end{itemize}
with the identity as change of trivialisation so that the restrictions of~$L_0$ and~$L_1$ 
are globally products $N \times  \rr$ and $N  \times e^{i \frac{\pi}{2}}  \rr$ respectively.

As we saw in Section~\ref{sec:isotropicgluing}, this means that the fibrewise surgery with fixed parameter~$\varepsilon$ globalizes to a bundle surgery over~$N$.

The same construction can be done along the other intersection circle~$N' = \{[1:0]\} \times \drr$. 

Let us describe now the choice of surgeries we make to produce a monotone Lagrangian.
In the symplectic normal bundle of~$N$, for any fiber~$\cc$ of a point~$p \in N$ we 
choose the surgery
$$\left \{ { \left. e^{-t} x_0 + e^{t}  e^{i \frac{\pi}{2} } x_0 \right|   
x_0 \in \rr, x_0^2 = \varepsilon_1^2  } \right \}.$$
For the other intersection circle~$N'$, the symplectic normal bundle is again trivial 
and the restriction of~$L_0$ and~$L_1$ 
are  $N' \times  \rr$ and $N'  \times e^{-i \frac{\pi}{2}}  \rr$ respectively, and 
we choose the following handle :
$$\left \{ { \left. e^{-t} x_1 + e^{t}  e^{i \frac{\pi}{2} } (-x_1) \right|   
x_1 \in \rr, x_1^2 = \varepsilon_1^2  } \right \}.$$

The Lagrangian constructed via these surgeries fibers over~$N$ 
(and~$N'$) as both handles and the Lagrangians~$L_0$ and~$L_1$ do, 
the projection being the restriction of the projection of the product onto its 
second factor:
$$pr_2 : \drc \times \drc \longrightarrow \drc.$$
It actually lies in the $\drc$-bundle over~$N$, given by restricting~$pr_2$ to~$\drc \times \drr$.
In a fiber~$\drc$ of this fibration, with our choice of surgery, the restriction of~$L$ 
is one circle in the tennis ball seam shape cutting the fiber into two disks of equal area~$1$.
As through this fibration we see that the Lagrangian is just a product of~$N$ and the 
tennis ball seam, the Maslov class of any of these disks of area~$1$ is the 
Maslov class of the tennis ball seam in the fiber~$\drc$, that is~$2$.

This shows that the Lagrangian we constructed is a torus and that it is monotone.

Unfortunately, this torus is not new, it is Hamiltonian isotopic to the real part 
of~$\drc \times \drc$. For the isotopy we could just take the extension of the exact 
Lagrangian isotopy that isotopes in each fiber of~$pr_2$ the tennis ball seam on the 
real line~$\drr$. We have proved:

\begin{thm}
The Lagrangian bundle surgery construction above produces a monotone Lagrangian 
embedding of a torus $L$ in~$\drc \times \drc$ which is Hamiltonian isotopic to 
the real part of~$\drc \times \drc$ and projects under the moment map to a smooth 
interior approximation of the shaded polytope in Figure~\ref{fig:Toreproduit}.
\end{thm}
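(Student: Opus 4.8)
The plan is to collect the structural facts established in the construction above into the four assertions of the statement. First I would pin down the topology of $L$. By construction the surgered Lagrangian fibres over the circle $N \cong \drr$ through the restriction of $pr_2$, and in each fibre $\drc$ it is a single tennis-ball-seam circle; since the whole picture is a product of $N$ with this seam, $L$ is the trivial circle bundle over a circle and hence an embedded torus $\TT^2$ (the product structure is what rules out a Klein bottle).

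For monotonicity I would use that $H_2(\drc \times \drc, L)$ is generated by the sphere classes of $\drc \times \drc$, by disks whose boundary lies entirely on $L_0$ or on $L_1$, and by the fibre disks cut out by the seam in a fibre $\drc$ of $pr_2|_{\drc \times \drr}$. On the sphere classes the relation $\int \omega = \frac{1}{2}\mu$ follows from the monotonicity of $\drc \times \drc$ itself; on boundaries lying on $L_0$ or $L_1$ it holds because these are Hamiltonian copies of the monotone real part; and on a fibre disk one reads off directly from the construction that the area is $1$ and the Maslov class is $2$, so again $1 = \frac{1}{2}\cdot 2$. Since any remaining relative class differs from a combination of these by a sphere, this establishes monotonicity of $L$.

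To produce the Hamiltonian isotopy I would argue fibrewise and then globalise. In a single fibre $\drc$ the tennis-ball seam bounds two disks of equal area $1$; this equal-area property is exactly what makes it Hamiltonian isotopic inside $\drc$ to the equator $\drr$ through an exact isotopy of embedded circles (any symplectic isotopy of $S^2$ is Hamiltonian since $H^1(\drc) = 0$, and preserving the equal-area splitting kills the flux). Carrying out this isotopy simultaneously in every fibre, compatibly with the product fibration over $N$ shared by $L$, $L_0$ and the seam, yields an exact Lagrangian isotopy from $L$ to $\drr \times \drr$, i.e. to the real part. By the standard fact that an exact Lagrangian isotopy of a compact Lagrangian is induced by an ambient Hamiltonian isotopy, this upgrades to a Hamiltonian isotopy of $\drc \times \drc$ carrying $L$ to the real part.

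Finally, for the moment image I would use that the surgeries are supported in arbitrarily small neighbourhoods of the two intersection circles $N$ and $N'$, which project under $\mu$ to the two parallel edges of the square polytope; away from these neighbourhoods $L$ agrees with $L_0 \cup L_1$, whose moment image is the whole square. Thus surgery pushes $L$ into the interior only along those two edges, so $\mu(L)$ is a smooth interior approximation of the shaded polytope of Figure~\ref{fig:Toreproduit}. The step I expect to require the most care is the Hamiltonian claim: one must check both that the fibrewise equal-area isotopy is genuinely exact rather than merely Lagrangian, and that the family of fibrewise isotopies can be chosen to depend smoothly on the base $N$ so as to assemble into a single ambient Hamiltonian flow.
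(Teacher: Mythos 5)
Your proposal follows essentially the same route as the paper: identify $L$ as the product of $N$ with the fibrewise tennis-ball seam (hence a torus), verify monotonicity by computing area $1$ and Maslov class $2$ for a fibre disk and invoking monotonicity of the ambient spheres and of $L_0$, $L_1$ for the remaining generators, and obtain the Hamiltonian isotopy by extending the fibrewise exact isotopy of the equal-area seam onto $\drr$. The extra care you flag about exactness and smooth dependence on the base is exactly what the paper's one-line isotopy argument implicitly relies on, so there is no substantive divergence.
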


\begin{figure}[htbp]
  \begin{center}
   \psfrag{2}{$2$}
   \psfrag{eps1}{$\varepsilon_1^2$}  
   \psfrag{2-eps1}{$2-\varepsilon_1^2$} 
   \includegraphics[height=6cm]{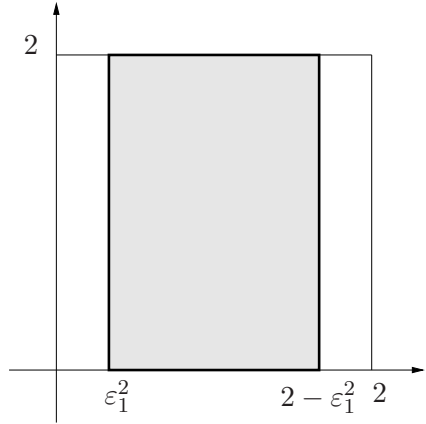}
   \caption{The image of~$L$ under the moment map is contained in and smoothly approximates the shaded polytope.} 
   \label{fig:Toreproduit}
     \end{center}
\end{figure}

\subsection{Recovering the Chekanov--Schlenk torus : Case of two copies of~$\plr$ 
intersecting in a point and along a circle in~$\plc$}
\label{sec:exotictorus}

We detail here how our method produces a torus Hamiltonian isotopic to the model 
torus we started with, i.e. the exotic torus of Chekanov and Schlenk.

\begin{thm}
With a Lagrangian surgery at a point and a Lagrangian surgery along a circle of 
two copies of~$\plr$ one can construct a monotone Lagrangian 
embedding of the torus in~$\plc$ that projects through the moment map to a smooth 
interior approximation of the shaded polytope in Figure~\ref{fig:Thetasurg}. 
\end{thm}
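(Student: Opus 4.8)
The plan is to realize this torus as the reconstruction of the modified Chekanov torus~$\tTetCh$ from its trapezoidal moment image, now reinterpreted through the two local surgery models of Section~\ref{sec:LocalModels}. First I would fix the two copies of the real part, $L_0 = \{[x_0:x_1:x_2] \mid x_i \in \rr\}$ and $L_1 = \Phi(L_0)$, where $\Phi$ is the Hamiltonian diffeomorphism induced by the diagonal rotation $\mathrm{diag}(e^{i\alpha}, e^{i\alpha})$ of the first two homogeneous coordinates (this descends from a torus action on~$\plc$, hence is Hamiltonian). A direct computation in homogeneous coordinates shows that, for $\alpha$ not a multiple of~$\pi$, these two Lagrangians meet exactly at the point~$[0:0:1]$, transversally, and along the circle $\drr = \{[x_0:x_1:0]\} \subset \{z_2 = 0\}$, cleanly; the vertex~$[0:0:1]$ of the moment triangle and the image of~$\{z_2=0\}$ are precisely the two edges of the trapezoid along which~$\tTetCh$ is glued.

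Next I would perform the two surgeries. At~$[0:0:1]$ I apply the point surgery of Section~\ref{sec:surgeryatapoint} in a small Darboux ball: since the local model is $l_0 = \rr^2$ and $l_1 = e^{i\alpha}\rr^2$, the condition $\sin\alpha = \sin\alpha$ holds and the first handle applies, chopping the corner of the polytope while leaving a small controllable area $a(\varepsilon)$. Along~$\drr$ I use the bundle surgery of Section~\ref{sec:isotropicgluing}: here $N = \drr$ is isotropic, its symplectic normal bundle in~$\plc$ is the trivial rank-$2$ bundle $\cerc \times \cc$, and $L_0$, $L_1$ restrict fiberwise to~$\rr$ and~$e^{i\alpha}\rr$. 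Because the normal line bundle of~$\drr$ inside~$\plr$ is the Möbius bundle, I land in the non-orientable case, so I would invoke the $(-1)$-invariance argument of Section~\ref{sec:isotropicgluing} to globalize the fiberwise handle over~$\cerc$. This surgery pushes~$L$ off the image of~$\{z_2=0\}$ into the interior, so that $\mu(L)$ smoothly approximates the shaded trapezoid of Figure~\ref{fig:Thetasurg}.

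It then remains to identify~$L$ topologically and to verify monotonicity. The point surgery on the two copies of~$\plr$ contributes a connected sum, and the clean circle surgery is set up (by the above choice of gluing) to yield an orientable total space of zero Euler characteristic, so that~$L$ is a torus; equivalently, $L$ is exactly the reconstruction of~$\tTetCh$ from the trapezoid, which is a torus Hamiltonian isotopic to~$\TetCS$ by~\cite{Exotic}. For monotonicity I would argue as in Section~\ref{sec:KSigma-monotonicity}: since $H_2(L)=0$, one has $H_2(\plc, L) \cong H_2(\plc) \oplus H_1(L)$; the sphere factor inherits monotonicity from~$\plc$, loops lying in~$L_0$ or~$L_1$ bound Lagrangian disks, and it suffices to compute the area and Maslov class of the finitely many disks associated to loops crossing the two handles, choosing~$\alpha$ and~$\varepsilon$ so that each has area~$\tfrac12$ and Maslov index~$1$. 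Alternatively, monotonicity follows for free from the identification with~$\tTetCh$ and the result of~\cite{MR2735030}.

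The main obstacle is the circle surgery: matching the clean intersection along~$\drr$ to the Weinstein/Pozniak normal forms, correctly identifying the symplectic normal bundle together with its Lagrangian subbundle (trivial versus Möbius), and checking that the fiberwise handle genuinely descends to a globally embedded Lagrangian over~$\cerc$ with the expected orientable total space. The monotonicity bookkeeping is delicate but parallels the earlier sections, the only genuinely new input being the handle along the circle, whose area and Maslov contribution I would compute fiberwise exactly as for the tennis-ball seam.
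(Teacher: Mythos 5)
Your proposal follows essentially the same route as the paper: the same pair $L_0$ and $L_1 = \mathrm{diag}(e^{i\alpha},e^{i\alpha})\cdot L_0$, the same point surgery at~$[0:0:1]$ and bundle surgery along the circle in~$\{z_2=0\}$ with the M\"obius-type Lagrangian subbundle, and the same monotonicity check via area and Maslov computations in the coordinate spheres with~$\alpha$ tuned (the paper takes $\alpha = \frac{2\pi}{3}+\delta$ so that the circles in each coordinate sphere bound disks of area~$1$ and Maslov class~$2$, rather than your~$\tfrac12$ and~$1$, but both satisfy the monotonicity ratio). The only caveat is that your fallback of deducing monotonicity from the identification with~$\tTetCh$ would be circular, since in the paper that identification is proved afterwards and itself relies on the area normalization established here.
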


\begin{figure}[htbp]
  \begin{center}
   \psfrag{3}{$3$}
   \psfrag{eps1}{$\varepsilon_1^2$}  
   \psfrag{3-eps1}{$3-\varepsilon_1^2$} 
   \includegraphics[height=6cm]{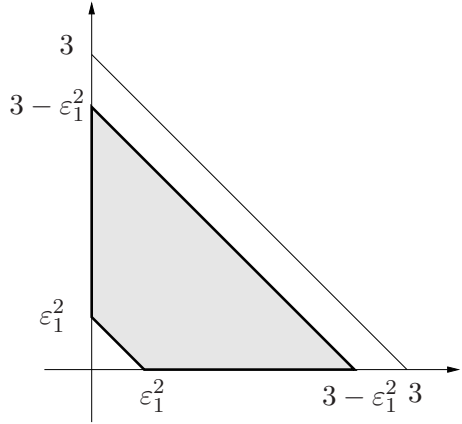}
   \caption{The image of the torus under the moment map is contained in and smoothly approximates the shaded polytope.} 
   \label{fig:Thetasurg}
     \end{center}
\end{figure}

\begin{proof}
Let us consider the following two Hamiltonian isotopic copies of~$\plr$ in~$\plc$:
$$L_0 = \{ \left. [x_0 : x_1 : x_2] \in \plc \right|  x_0, x_1, x_2 \in \rr \}$$
and
$$L_1 = \{ \left. [e^{i \alpha} x_0: e^{i \alpha} x_1: x_2] \in \plc \right|  x_0, x_1, x_2 \in \rr \},$$
for some~$\alpha \in (0, \pi)$.
The two Lagrangians~$L_0$ and~$L_1$ intersect exactly at the point~$[0:0:1]$ and along the 
isotropic circle 
$$N = \{[x_0:x_1:0] \mid (x_0,x_1) \in \rr^2 \setminus \{(0,0)\}\}.$$

We will make a Lagrangian bundle surgery along the isotropic circle 
and a simple Lagrangian surgery at the point~$[0:0:1]$.

Let us first understand the neighbourhood of~$N$ in~$\plc$.
The circle~$N$ can be covered by the following two charts of~$\plc$:
$$
\begin{array}{cccc}
\phi_0: & U_0 = \{[z_0 : z_1 : z_2] \mid z_0 \neq 0\} & \longrightarrow & \cc \oplus \cc \\
        & [z_0:z_1:z_2]                               & \longmapsto     & \left( \frac{z_1}{z_0}, \frac{z_2}{z_0} \right)
\end{array}
$$
and
$$
\begin{array}{cccc}
\phi_1: & U_1 = \{[z_0 : z_1 : z_2] \mid z_1 \neq 0\} & \longrightarrow & \cc \oplus \cc \\
        & [z_0:z_1:z_2]                               & \longmapsto     & \left( \frac{z_0}{z_1}, \frac{z_2}{z_1} \right)
\end{array}
$$
so that~$T \plc_{\mid N}$ can be trivialised along~$N_0 = N \cap U_0$ and~$N_1 = N \cap U_1$
as 
$$T \plc_{\mid N_j} \cong N_j \times (\cc \oplus \cc).$$
In these trivialisations, 
$$T N_{\mid N_j} \cong N_j \times (\rr \oplus \{0\})$$
and we have 
$$T N^{\bot}_{\mid N_j} \cong N_j \times (\rr \oplus \cc)$$
so that
$$SN(N,\plc)_{\mid N_j} = T N^{\bot} / TN_{\mid N_j} \cong N_j \times  \cc,$$
and the change of trivialisation from~$N_0 \times \cc$ to~$N_1 \times \cc$ in a fibre 
over a point~$[a:b:0]$ of~$N_0 \cap N_1$ 
is ~$([a:b:0] , Z) \mapsto  ([a:b:0] , \frac{a}{b} Z)$. 
As the intersection~$N_0 \cap N_1$ retracts onto~$\{[1:1:0], [-1:1:0]\}$, we have only 
two changes of trivialisation to consider: 
in~$[1:1:0]$ it is the identity, and in~$[-1:1:0]$ it is minus the identity.

Now, in the trivialisations, the trace of the initial Lagrangians  
in~$SN(N,\plc)_{\mid N_j}$ are:
\begin{itemize}
\item for~$L_0$: $ N_j \times  \rr$,
\item for~$L_1$: $ N_j \times e^{-i \alpha}  \rr$,
\end{itemize}
with the same change of trivialization as before.

One can then make a Lagrangian surgery in each fibre~$\cc$, and it globalizes to a 
Lagrangian subbundle over~$N$ in the symplectic normal bundle.

We next do a surgery at the transverse intersection point~$[0:0:1]$, so that we get 
an embedded Lagrangian submanifold out of the surgeries on~$L_0$ and~$L_1$. 

We show that for some choices of handles and of~$\alpha$, this Lagrangian 
submanifold is monotone.
Take~$\alpha = \frac{2\pi}{3} + \delta$, for $\delta > 0$ a small parameter to be 
determined later.
We choose the bundle surgery that in each fiber~$\cc$ over a point of~$N$ it 
is parametrized by
$$\left \{ {  e^{-t} x_2  + e^{t}  e^{-i \alpha}  x_2   \left| {  \begin{array}{c} t \in \rr \\ x_2 \in \rr, x_2^2 = \varepsilon_1^2 \end{array} }   \right.  }\right \},$$
followed by a symmetric smoothing.

In the chart at the transverse intersection point~$[0:0:1]$, the two Lagrangians 
are the linear Lagrangian subspaces~$l_0 = \rr^2$ 
and~$l_1 = \{(e^{i \alpha} x_0, e^{i \alpha} x_1) | x_0, x_1 \in \rr\}$ 
and we choose the handle
$$\left \{ { \left. e^{-t} \left( { \begin{array}{c} x_0\\x_1 \end{array} } \right) 
+ e^{t} \left( { \begin{array}{c} e^{i \alpha} x_0\\ e^{i \alpha} x_1 \end{array} } \right) 
\right|   \begin{array}{c} t \in \rr \\x_0^2+x_1^2 = \varepsilon_1^2 \end{array} } \right \}.$$

After surgery, the restriction to the coordinate-$\drc$s of homogeneous equations~$z_0 = 0$ 
(resp. $z_1 = 0$) of the Lagrangian we constructed 
is the union of two circles enclosing disks of area
$$1+\frac{\delta}{2 \pi} - 2 a(\varepsilon).$$
One checks with the same method as before that the Maslov class of these disks 
is equal to~$2$.
Then we choose~$\delta$ such that the area of each of these disks is~$1$.
This is enough to check monotonicity since this Lagrangian is a circle 
subbundle over~$N$ in the normal bundle of~$\{z_2 = 0\}$, and hence a torus. 
Let us denote it by~$\Tetsurg$.
We have checked the monotonicity condition on just one generator 
of the relative second homology group, 
the monotonicity on a second generator is immediate as it can be represented by a disk 
with boundary on~$L_0$ or~$L_1$ for which this condition is satisfied.
\end{proof}

\begin{thm}
The Lagrangian torus~$\Tetsurg$ is Hamiltonian isotopic to the modified Chekanov torus 
in~$\plc$ and consequently also to the Chekanov--Schlenk exotic torus.
\end{thm}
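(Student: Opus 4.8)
The plan is to identify the torus $\Tetsurg$ constructed by surgery with the modified Chekanov torus $\tTetCh$ by matching their descriptions as bundles over the circle $N$ at infinity, fibering over $\{z_2 = 0\}$ via $[z_0:z_1:z_2] \mapsto [z_0:z_1]$. First I would observe that both tori are invariant under the $S^1$-action $[z_0:z_1:z_2] \mapsto [e^{i\psi}z_0 : e^{i\psi}z_1 : z_2]$ coming from the diagonal action on the first two homogeneous coordinates, so that each fibers over the base circle $\drr = N/S^1$ with circle fibers, and it suffices to match the two tori fiber by fiber together with the monodromy as one traverses $N$. This reduces the global comparison to a one-parameter family of planar problems in the normal $\cc$-direction (the $z_2$-coordinate relative to the fiber), where in each fiber $\Tetsurg$ is the tennis-ball-seam curve (the surgered union of $l_0 = \rr$ and $l_1 = e^{-i\alpha}\rr$ with its symmetric smoothing) while $\tTetCh$ is the curve $\gamma$ from the definition.

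The key geometric point is that in each normal fiber, both curves are embedded loops enclosing a region of the same area (by the monotonicity normalization, each encloses area $1$ on each side) and bounding disks of the same Maslov index $2$. I would then invoke the standard fact that any two embedded curves in $\cc$ (or in the fiber $\drc$) that enclose equal areas and are homotopic are isotopic through an exact Lagrangian (indeed Hamiltonian) isotopy of the fiber; concretely, one interpolates the tennis-ball seam to the curve $\gamma$ by an area-preserving isotopy of the disk. The main work is to make this fiberwise isotopy depend smoothly on the base point of $N$ and to respect the monodromy. Because the symplectic normal bundle of $N$ is trivial while the Lagrangian subbundle may carry the sign monodromy (the $-1$ change of trivialization at $[-1:1:0]$ recorded in Section~\ref{sec:exotictorus}), I must arrange that the chosen fiberwise isotopy is equivariant under multiplication by $-1$, exactly as the surgery itself was made invariant in Section~\ref{sec:isotropicgluing}. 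This equivariance guarantees the family of fiberwise Hamiltonians descends to a genuine global Hamiltonian isotopy on a neighborhood of $N$ in $\plc$.

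I would then extend this local-model isotopy across the rest of $\plc$, in particular past the surgery point $[0:0:1]$, by a cutoff argument: the isotopy is supported in a neighborhood of the region where $\Tetsurg$ and $\tTetCh$ differ, and one checks that near the point-surgery both tori already agree up to a small Hamiltonian perturbation since both arise from the same two copies of $\plr$ with the Polterovich-type handle of Section~\ref{sec:surgeryatapoint}. Assembling the fiberwise isotopy over $N$ with the local isotopy near $[0:0:1]$ and interpolating on the overlap yields a Hamiltonian isotopy of $\plc$ carrying $\Tetsurg$ to $\tTetCh$. The second assertion, that $\Tetsurg$ is then also Hamiltonian isotopic to the Chekanov--Schlenk torus $\TetCS$, is immediate: by the result of~\cite{Exotic} recalled in the introduction, $\tTetCh$ is Hamiltonian isotopic to $\TetCS$, and Hamiltonian isotopy is transitive.

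The hard part will be the bookkeeping that makes the fiberwise area-preserving isotopy both smooth in the base parameter and compatible with the sign monodromy, so that it genuinely globalizes rather than merely matching the tori fiber by fiber. A careful but routine check that the enclosed areas and Maslov indices coincide in every fiber (which is essentially the content of the monotonicity computation already carried out in the construction of $\Tetsurg$) is what licenses the fiberwise isotopy in the first place; the geometric input is the uniqueness up to Hamiltonian isotopy of embedded equal-area curves in the fiber, and the technical input is the equivariant, parametrized version of that uniqueness.
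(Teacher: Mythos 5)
Your overall strategy --- reduce the comparison to an area-preserving isotopy of curves in the direction normal to the circle $N$ at infinity --- is in the right spirit, but the symmetry you invoke to organize it is wrong, and this is not a cosmetic slip: it is exactly where the proof lives. Neither torus is invariant under the diagonal action $[z_0:z_1:z_2]\mapsto[e^{i\psi}z_0:e^{i\psi}z_1:z_2]$. That action sends $[x_0:x_1:x_2]\in\plr$ to $[x_0:x_1:e^{-i\psi}x_2]$, which is not a real point for generic $\psi$, so it preserves neither $L_0$ nor $L_1$ nor $\Tetsurg$; a short computation (forcing $e^{i\psi}\gamma(s)=\pm\gamma(s')$ with $|\gamma(s')|=|\gamma(s)|$) shows it does not preserve $\tTetCh$ either. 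Moreover it fixes $N$ pointwise, so it cannot induce a circle fibration of either torus over $N$. The symmetry that does the work is the \emph{real} rotation $\rCh$, acting by $\left(\begin{smallmatrix}\cos\theta & -\sin\theta\\ \sin\theta & \cos\theta\end{smallmatrix}\right)$ on $(z_0,z_1)$: it acts transitively on $N$, visibly preserves $L_0$, $L_1$ and $\tTetCh$, and --- after writing the two handles in homogeneous coordinates and choosing $\rCh$-invariant smoothings --- preserves $\Tetsurg$ as well. Once this is established, $\Tetsurg$ is the $\rCh$-orbit of a single circle of area $1$ lying in the half-plane $\{z_0=0,\ x_1\ge 0\}$, and the entire parametrized family of fiberwise isotopies you propose, together with the smooth dependence on the base point and the sign-monodromy bookkeeping that you yourself flag as ``the hard part,'' collapses to a single area-preserving isotopy of that one curve onto $\gamma$ inside the half-plane, which $\rCh$ then sweeps into an exact Lagrangian isotopy of tori. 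Without a correct transitive symmetry you have not supplied that hard part, so the argument is incomplete precisely at its crux.

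Two further problems. First, your patching near $[0:0:1]$ is circular: $\tTetCh$ is given by an explicit formula, not by a Polterovich surgery, so the claim that the two tori ``already agree up to a small Hamiltonian perturbation'' there is part of what must be proved, not an input (and there is no reason for the scales $\varepsilon_1$ and $\rho_{\min}$ to match). In the correct argument the point-surgery handle, written as $\{[x_0:x_1:e^{-t}x_2+e^{t}e^{-i\alpha}x_2]\}$, is itself the $\rCh$-orbit of one of its branches in $\{z_0=0\}$, so it is absorbed into the single curve being isotoped and no separate cutoff or interpolation is needed. Second, your fiberwise picture is the one from the $\drc\times\drc$ example: for $\Tetsurg$ the intersection with each coordinate line is a union of \emph{two} circles, each bounding a disk of area $1$, not a single tennis-ball seam, and the torus is the orbit of one of the two. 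Only your final step --- transitivity of Hamiltonian isotopy via the result of~\cite{Exotic} identifying $\tTetCh$ with $\TetCS$ --- is correct as stated.
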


\begin{proof}
We use the strategy of~\cite{Exotic} and prove that 
the torus obtained by surgery is invariant under the same Hamiltonian action (called~$\rCh$ 
in~\cite{Exotic}) as the modified Chekanov torus.

The modified Chekanov torus is invariant under the Hamiltonian circle action~$\rCh$ defined 
on~$\plc$ by applying the following matrix to the homogeneous coordinates:
$$\left ( { \begin{array}{ccc}  
\cos(\theta) & - \sin(\theta) & 0 \\
\sin(\theta) & \cos(\theta)   & 0 \\
      0      &      0         & 1  \end{array}  } \right ),$$
where~$\theta \in \rr / 2 \pi \zz$.

This circle action preserves~$L_0, L_1, N$ and~$[0:0:1]$.

In the chart around~$[0:0:1]$, it restricts on a ball in~$\cc^2$ to the action defined 
by the rotation matrix
$$\left ( { \begin{array}{cc}  \cos(\theta) & - \sin(\theta) \\
      \sin(\theta) & \cos(\theta) \end{array}  } \right ).$$
      
In particular,
\begin{itemize}
\item it preserves the handle of the local surgery at the point~$[0:0:1]$,
\item one can ask the smoothing to be invariant under the action, so that the entire 
surgery is preserved,
\item the action commutes with the homotheties defining the conformal transformation.
\end{itemize}
Moreover, $L_0$ is the orbit under this action restricted to the projective 
line of homogeneous equation~$z_0 = 0$ and even 
to~$\{ [0:x_1:x_2] \in \plc \mid  x_1, x_2 \in \rr, x_1 \geq 0 \}$.
Similarly, $L_1$ is the orbit under this action 
of~$\{[0: e^{i \alpha} x_1 : x_2] \in \plc \mid x_1, x_2 \in \rr, x_1 \geq 0  \}$, 
and the handle is the orbit of one of its branches intersected with~$z_0 = 0$, 
for instance 
of~$\{[ 0: e^{-t} x_1 +  e^{t} e^{i \alpha} x_1 : x_2] \in \plc \mid x_1, x_2 \in \rr, x_1=+ \varepsilon_1 \}$.

One can describe the handle in homogeneous coordinates as
$$\{[x_0 : x_1 : e^{-t} x_2  + e^{t}  e^{-i \alpha} x_2] \in \plc \mid x_0, x_1 \in \rr, x_2^2= \varepsilon_1^2\}$$
and see that it is preserved by the circle action~$\rCh$. In fact, it is the orbit of 
one of its branches in~$z_0 = 0$, for instance
$$\{[0 : x_1 : e^{-t} x_2  + e^{t}  e^{-i \alpha} x_2] \in \plc \mid x_0, x_1 \in \rr, x_2=+ \varepsilon_1\}.$$
We can also take a smoothing that is preserved by the circle action and the orbit 
under the circle action of the smoothing in~$z_0 = 0$.

This shows that the torus~$\Tetsurg$ is the orbit under the circle action~$\rCh$ of 
one of the two circles that constitute its intersection with~$z_0 = 0$. 
Actually, as the third homogeneous coordinate in the handle never vanishes, 
the torus~$\Tetsurg$ lies in the complement of $z_2 = 0$, that is the ball of capacity~$3$ 
around~$[0:0:1]$, and is also in this ball the orbit under the circle action of the circle 
lying in its intersection with the half plane of equations~$z_0 = 0$  and~$\Re e(z_0) \geq 0$. 

As this circle is by construction of area~$1$, one can isotope it inside this half-plane 
to the curve~$\gamma$ used to define~$\tTetCh$.
This isotopy composed with the Hamiltonian circle action gives an exact Lagrangian isotopy 
between the two tori that can be extended to a Hamiltonian isotopy of~$\plc$ as 
in~\cite{Exotic}.
\end{proof}

\bibliographystyle{plain}
\bibliography{Bibliography}

\def\cprime{$'$}
\begin{thebibliography}{10}

\bibitem{MR3042606}
Miguel Abreu and Leonardo Macarini.
\newblock Remarks on {L}agrangian intersections in toric manifolds.
\newblock {\em Trans. Amer. Math. Soc.}, 365(7):3851--3875, 2013.

\bibitem{MR2091310}
Mich{\`e}le Audin.
\newblock {\em Torus actions on symplectic manifolds}, volume~93 of {\em
  Progress in Mathematics}.
\newblock Birkh\"auser Verlag, Basel, revised edition, 2004.

\bibitem{Bir-Cor-uniruling}
Paul Biran and Octav Cornea.
\newblock Rigidity and uniruling for {L}agrangian submanifolds.
\newblock {\em Geom. Topol.}, 13(5):2881--2989, 2009.

\bibitem{MR1853077}
Ana Cannas~da Silva.
\newblock {\em Lectures on symplectic geometry}, volume 1764 of {\em Lecture
  Notes in Mathematics}.
\newblock Springer-Verlag, Berlin, 2001.

\bibitem{Chek-SchI}
Yuri Chekanov and Felix Schlenk.
\newblock Twist tori {I}: {C}onstruction and classification.
\newblock {\em in preparation}.

\bibitem{Chek-SchII}
Yuri Chekanov and Felix Schlenk.
\newblock Twist tori {II}: {N}on-displaceability.
\newblock {\em in preparation}.

\bibitem{MR2735030}
Yuri Chekanov and Felix Schlenk.
\newblock Notes on monotone {L}agrangian twist tori.
\newblock {\em Electron. Res. Announc. Math. Sci.}, 17:104--121, 2010.

\bibitem{Exotic}
Agn{\`e}s Gadbled.
\newblock On exotic monotone {L}agrangian tori in {$\Bbb{CP}^2$} and {$\Bbb
  S^2\times\Bbb S^2$}.
\newblock {\em J. Symplectic Geom.}, 11(3):343--361, 2013.

\bibitem{MR868559}
A.~B. Givental.
\newblock Lagrangian imbeddings of surfaces and the open {W}hitney umbrella.
\newblock {\em Funktsional. Anal. i Prilozhen.}, 20(3):35--41, 96, 1986.

\bibitem{MR2091366}
H{\^o}ng-V{\^a}n L{\^e}.
\newblock A minimizing deformation of {L}egendrian submanifolds in the standard
  sphere.
\newblock {\em Differential Geom. Appl.}, 21(3):297--316, 2004.

\bibitem{Oh_I}
Yong-Geun Oh.
\newblock Floer cohomology of {L}agrangian intersections and pseudo-holomorphic
  disks. {I}.
\newblock {\em Comm. Pure Appl. Math.}, 46(7):949--993, 1993.

\bibitem{MR1097259}
Leonid Polterovich.
\newblock The surgery of {L}agrange submanifolds.
\newblock {\em Geom. Funct. Anal.}, 1(2):198--210, 1991.

\bibitem{MR1736217}
Marcin Po{\'z}niak.
\newblock Floer homology, {N}ovikov rings and clean intersections.
\newblock In {\em Northern {C}alifornia {S}ymplectic {G}eometry {S}eminar},
  volume 196 of {\em Amer. Math. Soc. Transl. Ser. 2}, pages 119--181. Amer.
  Math. Soc., Providence, RI, 1999.

\bibitem{MR2583968}
V.~V. Shevchishin.
\newblock Lagrangian embeddings of the {K}lein bottle and the combinatorial
  properties of mapping class groups.
\newblock {\em Izv. Ross. Akad. Nauk Ser. Mat.}, 73(4):153--224, 2009.

\bibitem{MR0464312}
Alan Weinstein.
\newblock {\em Lectures on symplectic manifolds}.
\newblock American Mathematical Society, Providence, R.I., 1977.
\newblock Expository lectures from the CBMS Regional Conference held at the
  University of North Carolina, March 8--12, 1976, Regional Conference Series
  in Mathematics, No. 29.

\bibitem{MR633631}
Alan Weinstein.
\newblock Neighborhood classification of isotropic embeddings.
\newblock {\em J. Differential Geom.}, 16(1):125--128, 1981.

\end{thebibliography}

%

\end{document}